\numberwithin{equation}{section}
\newcommand{\Qq}{\mathbb{Q}}
\newcommand{\Rr}{\mathbb{R}}
\newcommand{\Zz}{\mathbb{Z}}
\newcommand{\vol}{\operatorname{vol}}
\newcommand{\lcm}{\operatorname{lcm}}
\newcommand{\Supp}{\operatorname{Supp}}
\newcommand{\mult}{\operatorname{mult}}
\newcommand{\Ii}{{\Gamma}}
\newtheorem{thm}{Theorem}[section]
\newtheorem{cor}[thm]{Corollary}
\newtheorem{lem}[thm]{Lemma}
\newtheorem{claim}[thm]{Claim}
\theoremstyle{definition}
\newtheorem{rem}[thm]{Remark}
\newtheorem{ex}[thm]{Example}
\theoremstyle{definition}
\newtheorem{defn}[thm]{Definition}
\begin{document}

\title{Effective birationality for sub-pairs with real coefficients}

\author{Jingjun Han and Jihao Liu}

\begin{abstract}
For $\epsilon$-lc Fano type varieties $X$ of dimension $d$ and a given finite set $\Ii$, we show that there exists a positive integer $m_0$ which only depends on $\epsilon,d$ and $\Ii$, such that both $|-mK_X-\sum_i\lceil mb_i\rceil B_i|$ and $|-mK_X-\sum_i\lfloor mb_i\rfloor B_i|$ define birational maps for any $m\ge m_0$ provided that $B_i$ are pseudo-effective Weil divisors, $b_i\in\Ii$, and $-(K_X+\sum_ib_iB_i)$ is big. When $\Ii\subset[0,1]$ satisfies the DCC but is not finite, we construct an example to show that the effective birationality may fail even if $X$ is fixed, $B_i$ are fixed prime divisors, and $(X,B)$ is $\epsilon'$-lc for some $\epsilon'>0$.
\end{abstract}

\address{Department of Mathematics, Johns Hopkins University, Baltimore, MD 21218, USA}
\email{jhan@math.jhu.edu}

\address{Department of Mathematics, The University of Uath, Salt Lake City, UT 84112, USA}
\email{jliu@math.utah.edu}

\subjclass[2010]{Primary 14E30, 
Secondary 14B05.}
\date{\today}

\maketitle
%\pagestyle{myheadings}\markboth{\hfill  Jihao Liu \hfill}{\hfill Note on effective birationality \hfill}

% admitting an $\epsilon$-plt blow up
\tableofcontents

\section{Introduction}

We work over the field of complex numbers $\mathbb C$.

For any Fano variety $X$, the anti-pluricanonical systems $|-mK_X|$ determine its geometry to a large
extent. In the past decade, one of the most important progress in birational geometry  is the effective birationality for $\epsilon$-lc Fano varieties proved by Birkar, which leads to a solution of the Borisov-Alexeev-Borisov (BAB) conjecture:

\begin{thm}\cite[Theorem 1.2]{Bir19}\label{thm: birkar -mK_X}
Let $d$ be a positive integer and $\epsilon$ a positive real number. Then there exists a positive integer $m$ depending only on $d$ and $\epsilon$ such that if $X$ is any $\epsilon$-lc weak Fano variety of dimension $d$, then $|-mK_{X}|$ defines a birational map.
\end{thm}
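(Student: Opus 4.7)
The plan is to follow the strategy that Birkar introduced in his series of papers: combine boundedness of complements for $\epsilon$-lc Fano type varieties with the non-klt centers method of Kawamata, Angehrn--Siu, and Takayama to produce sections of $|-mK_X|$ separating pairs of general points.

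First, I would reduce to the $\mathbb{Q}$-factorial case via a small $\mathbb{Q}$-factorialization, after which one may freely run MMPs. The key input is the existence of bounded $n$-complements: there should exist $n=n(d,\epsilon)$ and an effective $\mathbb{R}$-divisor $\Gamma$ with $n(K_X+\Gamma)\sim 0$ and $(X,\Gamma)$ lc, which in turn forces a uniform lower bound $\vol(-K_X)\ge v(d,\epsilon)>0$. This volume bound is the numerical seed for everything that follows and is itself where most of the genuinely new input lives.

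The core geometric step is then to produce non-klt centers through very general points. Given the volume lower bound, for two very general $x_1,x_2\in X$ one constructs an effective $D\sim_{\mathbb{R}} -tK_X$ with $t$ bounded in terms of $d$ and $\epsilon$ such that $(X,D)$ is not klt at both $x_i$. After tie-breaking with a small general complement and running an appropriate MMP, one isolates a unique non-klt center $V\subsetneq X$ containing $x_1$ with $(X,D')$ klt at $x_2$. Adjunction endows $V$ with a generalized lc pair structure whose data is controlled in terms of $d$ and $\epsilon$, and one then inducts on $\dim V$; sections produced on $V$ are lifted back to $X$ using Nadel vanishing applied to the multiplier ideal of $D'$, yielding sections of $|-mK_X|$ separating $x_1$ and $x_2$.

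The main obstacle, and the technical heart of the argument, is that adjunction to a non-klt center does not preserve the $\epsilon$-lc condition on the nose, so the induction cannot be carried out within the category of usual pairs. The resolution is to work throughout with generalized polarized pairs and to establish boundedness of complements in that larger category, which is both the deepest ingredient and the part I would expect to require the most care; once this is in place, the volume estimate, the non-klt centers construction, and the lifting via vanishing assemble into a uniform $m=m(d,\epsilon)$ for which $|-mK_X|$ defines a birational map.
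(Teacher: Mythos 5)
The paper does not prove this statement at all: Theorem \ref{thm: birkar -mK_X} is imported verbatim from \cite[Theorem 1.2]{Bir19} and used as a black box, so there is no internal proof to compare your sketch against. Judged against Birkar's actual argument, your outline captures the right cast of characters (complements, non-klt centers through general points, tie-breaking, adjunction to a minimal non-klt center, lifting sections by vanishing, and the need to pass to generalized pairs to make the induction close), but it contains one genuine logical gap.

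The gap is the claim that boundedness of $n$-complements ``forces a uniform lower bound $\vol(-K_X)\ge v(d,\epsilon)>0$,'' which you then treat as the numerical seed for producing non-klt centers. An $n$-complement gives $n(K_X+\Gamma)\sim 0$ with $(X,\Gamma)$ lc; it gives no lower bound on $\vol(-K_X)$. The uniform positive lower bound on $\vol(-K_X)$ for $\epsilon$-lc Fano varieties is essentially equivalent to the BAB conjecture and is proved in \cite{Bir16} \emph{using} effective birationality as an input, so assuming it here is circular. Birkar's proof avoids this by taking $m$ \emph{minimal} with $\vol(-mK_X)>(2d)^d$ (such $m$ exists since $-K_X$ is big, but is a priori unbounded), running the non-klt center construction with that $m$, and then showing $m$ is bounded via lower bounds for lc thresholds of $|-mK_X|_{\mathbb R}$ and the inductive control of the adjoint structure on the centers; a separate argument is also needed to bound $\vol(-mK_X)$ from above for the chosen $m$. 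Without replacing your volume hypothesis by this minimal-$m$ device (or some equivalent), the construction of the divisor $D\sim_{\mathbb R}-tK_X$ with bounded $t$ that is non-klt at two general points does not get off the ground. The rest of your sketch (tie-breaking, generalized adjunction, complements for generalized pairs, lifting by Kawamata--Viehweg/Nadel vanishing) is consistent with the known proof, though of course each of those steps hides substantial work.
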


It is natural to ask whether Theorem \ref{thm: birkar -mK_X} can be improved to the case of log pairs $(X,B)$. This is inspired by similar works in the general type case. The effective birationality of the linear systems $|mK_X|$ for smooth varieties $X$ of general type was proved by Hacon-M\textsuperscript{c}Kernan \cite{HM06}, and Takayama \cite{Tak06} independently. Later, in a series of celebrated papers, Hacon, M\textsuperscript{c}Kernan and Xu showed that the effective birationality of the linear systems $|m(K_X+B)|$ for lc pairs $(X,B)$ of log general type, provided that the coefficients of $B$ satisfy the descending chain condition (DCC) \cite{HMX13,HMX14}. Here we adopt the convention that $|D|:=|\lfloor D\rfloor|$ for any $\Rr$-divisor $D$.  Unfortunately, Example \ref{ex: gen hyp p11910} indicates that we cannot generalize Birkar's result straightforwardly, even when $\dim X=2$ and the coefficients set $\Ii$ satisfies the DCC. Nevertheless, when $\Ii$ is a finite set, the effective birationality holds even when we loose our assumption to a wider class of sub-pairs $(X,B)$.

\begin{thm}\label{thm: eff bir fin coeff gpair}
Let $d$ be a positive integer, $\epsilon$ a positive real number, and $\Ii_0$ a finite set of non-negative real numbers. Then there exists a positive integer $m_0$ depending only on $d,\epsilon$ and $\Ii_0$ satisfying the following. Assume that
	\begin{enumerate}
 \item $(X,\Delta)$ is projective $\epsilon$-lc of dimension $d$ for some boundary $\Delta$ such that $K_X+\Delta\sim_{\mathbb R}0$ and $\Delta$ is big,
 \item $B=\sum_{i=1}^s b_iB_i$ is a pseudo-effective $\mathbb R$-divisor on $X$, where each $b_i\in\Ii_0$, and each $B_i$ is a pseudo-effective Weil divisor, and
 \item $-(K_X+B)$ is big,
	\end{enumerate}
	then $|-mK_X-\sum_{i=1}^s\lceil mb_i\rceil B_i|$ and $|-mK_X-\sum_{i=1}^s\lfloor mb_i\rfloor B_i|$ define birational maps for any integer $m\geq m_0$ respectively. In particular, if each $B_i\geq 0$, then $|-m(K_X+B)|$ defines a birational map for any integer $m\geq m_0$.
\end{thm}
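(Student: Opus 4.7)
The plan is to adapt Birkar's strategy for Theorem \ref{thm: birkar -mK_X} (effective birationality for $\epsilon$-lc weak Fanos) to the sub-pair setting here, treating the rounded coefficients $\lceil mb_i\rceil/m$ and $\lfloor mb_i\rfloor/m$ as small perturbations of the $b_i$, and exploiting the finiteness of $\Ii_0$ to extract uniformity in $m$.

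First I would apply the BAB conjecture (a consequence of Theorem \ref{thm: birkar -mK_X}) to put $X$, after a small $\Qq$-factorialisation, into a bounded family depending only on $d$ and $\epsilon$. From boundedness I extract a uniform Cartier index $I_0$ for $K_X$ and every $\Qq$-Cartier Weil divisor, a very ample polarisation $H$ on $X$, a uniform upper bound on $\vol(-K_X)$, and a uniform bound on the pseudo-effective cone of $X$. Setting $b_{\min}:=\min\{b\in\Ii_0:b>0\}$, the chain $b_{\min}B_i\preceq B\preceq -K_X$ in the pseudo-effective cone confines each relevant $B_i$ to a fixed bounded set of numerical classes; in particular $\deg_H B_i\le C_0=C_0(d,\epsilon,\Ii_0)$.

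Next, set $b_i^{+}:=\lceil mb_i\rceil/m$, $b_i^{-}:=\lfloor mb_i\rfloor/m$, so that $|b_i^{\pm}-b_i|<1/m$, and let $B^{\pm}:=\sum_i b_i^{\pm}B_i$. Then
\[
-mK_X-\sum_i\lceil mb_i\rceil B_i=-m(K_X+B^{+}),\qquad -mK_X-\sum_i\lfloor mb_i\rfloor B_i=-m(K_X+B^{-}).
\]
Because bigness is an open condition in $N^1(X)_\RR$ and each $B_i$ lies in a bounded class, there is an $m_1=m_1(d,\epsilon,\Ii_0)$ such that for all $m\ge m_1$ both $-(K_X+B^{\pm})$ are big, with volumes uniformly bounded below by some $v_0=v_0(d,\epsilon,\Ii_0)>0$. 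A similar perturbation estimate, using that $(X,\Delta)$ is $\epsilon$-lc and that $B^{\pm}-B$ is uniformly small in a bounded family, yields a boundary $\Delta'\sim_{\RR} -K_X-B^{\pm}$ for which $(X,\Delta')$ is $(\epsilon/2)$-lc for every $m\ge m_1$.

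Finally, I would establish birationality by the standard non-klt-centre technique. For a very general pair of closed points $x_1,x_2\in X$, the uniform volume bound $v_0$ and a Hacon-M\textsuperscript{c}Kernan-Birkar cutting-down construction produce an effective $\Qq$-divisor $N\sim_\Qq t\bigl(-(K_X+B^{\pm})\bigr)$, with $0<t<1-\delta$ for some $\delta=\delta(d,\epsilon,\Ii_0)>0$, such that $(X,\Delta'+N)$ has an isolated non-klt centre through $x_1$ (respectively $x_2$). Kawamata-Viehweg / Nadel vanishing applied on a log resolution then lifts sections to $H^0\bigl(X,\Oo_X(-mK_X-\sum_i\lceil mb_i\rceil B_i)\bigr)$, and likewise for the floor version, that separate $x_1$ from $x_2$, giving birationality for all $m\ge m_0$ with $m_0$ depending only on $d,\epsilon,\Ii_0$. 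The main obstacle is keeping all estimates uniform in $m$: the pairs $(X,B^{\pm})$ genuinely vary with $m$, and the finiteness of $\Ii_0$ (not merely the DCC, cf.\ Example \ref{ex: gen hyp p11910}) is precisely what guarantees both the uniform volume lower bound $v_0$ and the uniform singularity control needed to carry out the non-klt construction.
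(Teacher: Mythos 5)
Your outline (boundedness via BAB, perturbing the coefficients to $\lceil mb_i\rceil/m$ and $\lfloor mb_i\rfloor/m$, then a non-klt-centre construction) follows the same broad strategy as the paper, but there is a genuine gap at the single most important step: the uniform lower bound $v_0$ on $\vol(-(K_X+B^{\pm}))$. You derive it from ``bigness is an open condition in $N^1(X)_\RR$ and each $B_i$ lies in a bounded class,'' but openness of bigness is a qualitative statement about a fixed class and yields no quantitative bound that is uniform over the whole family: the variety $X$, the number $s$ of summands, and the classes of the $B_i$ all vary, and a priori $\vol(-(K_X+B))$ could accumulate at $0$ --- this is exactly what happens in Example \ref{ex: p1 dcc vol no bound counter} when $\Ii_0$ is merely DCC rather than finite. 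Establishing that $\vol(-(K_X+B))$ lies in a finite set, hence is bounded below by a $v=v(d,\epsilon,\Ii_0)>0$, is precisely Theorem \ref{thm: bddvolacc maynotbelc}; its proof occupies Section 3 and requires (i) replacing each merely pseudo-effective Weil divisor $B_i$ by an effective $\Qq$-divisor $B_i'$ with $nB_i'\sim nB_i$ for a uniform $n$ (Lemma \ref{lem: psd ft change to eff}, via a $B_i$-MMP, bounded Cartier index, and Koll\'ar's effective base point freeness), (ii) log boundedness of $(X,\Supp B)$ from Corollary \ref{cor: bab log bdd}, and (iii) a flat-family argument using invariance of intersection numbers. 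None of this appears in your proposal, and without it the rest of the argument has nothing to run on.

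There are secondary problems as well. Since the $B_i$ are only pseudo-effective, $(X,B^{\pm})$ need not be sub-lc and $\Supp B$ is not even an effective divisor, so your boundary $\Delta'\sim_{\RR}-(K_X+B^{\pm})$ with $(X,\Delta')$ $(\epsilon/2)$-lc is neither constructed nor clearly what the vanishing step needs; the relevant criterion is that $-mK_X-\sum_i\lceil mb_i\rceil B_i-K_X$ (resp.\ the floor version) be potentially birational, as in Lemma \ref{lem: potentially birational}(1). The paper in fact avoids redoing the non-klt-centre construction for every $m$: once the volume bound is in hand, it approximates the $b_i$ by rationals with uniformly bounded denominator (using Theorem \ref{thm: approximation big bounded} to keep the volume large), obtains birationality for a single $m_1$ via bounded Cartier index and Koll\'ar's theorem (Lemma \ref{lem: finite rational coefficient eff bir}), and then propagates to all $m\geq m_0$ by the ``potentially birational plus big'' bookkeeping of Theorem \ref{thm: birational one m to inf m}, which is also where the floor/ceiling discrepancies your sketch glosses over are handled. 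If you want to salvage your route, you must first prove the volume statement of Section 3 and then make the cutting-down construction uniform in $m$; the latter is doable in a bounded family but is exactly the technical content you have deferred.
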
 
%In particular, if $mM$ is a Weil divisor, then $|-m(K_X+B+M)|$ defines a birational map. Moreover, if $M=0$, $|-m(K_X+B)|$ defines a birational map for any integer $m\geq m_0$.
In Theorem \ref{thm: eff bir fin coeff gpair}, $(X,B)$ may not even be sub-lc as $B_i$ may not be effective and $\Ii_0$ may not belong to $[0,1]$. It is also possible that $|-mK_X-\sum_{i=1}^s\lceil mb_i\rceil B_i|\not\subset |-mK_X-\sum_{i=1}^s\lfloor mb_i\rfloor B_i|$ although $\sum_{i=1}^s(\lceil mb_i\rceil-\lfloor mb_i\rfloor)B_i$ is pseudo-effective. 

\medskip

When $\vol(-(K_X+B))$ is bounded from below away from $0$, we can even remove the assumption on the coefficients $b_i$ in Theorem \ref{thm: eff bir fin coeff gpair}.

\begin{thm}\label{thm: eff bir bound volume more section gpair}
Let $d$ be a positive integer, and $\epsilon,v$ and $\delta$ three positive real numbers. Then there exist a positive integer $m_0$ depending only on $d,\epsilon$ and $v$, and a positive integer $m_0'$ depending only on $d,\epsilon,v$ and $\delta$ satisfying the following. Assume that
	\begin{itemize}
 \item $(X,\Delta)$ is projective $\epsilon$-lc of dimension $d$ for some boundary $\Delta$ such that $K_X+\Delta\sim_{\mathbb R}0$ and $\Delta$ is big,
 \item $B$ is a pseudo-effective $\mathbb R$-divisor on $X$, and
 \item $\vol(-(K_X+B))\geq v$,
	\end{itemize}
		then  
	\begin{enumerate}
	\item $|\lceil- m(K_X+B)\rceil|$ defines a birational map for any integer $m\geq m_0$,
	\item if $B=\sum_{i=1}^s b_iB_i$, where each $B_i$ is a pseudo-effective Weil divisor, then
	\begin{enumerate}
	    \item $|-mK_X-\sum_{i=1}^s\lfloor mb_i\rfloor B_i|$ defines a birational map for any integer $m\geq m_0$, and
	    \item  if $b_i\geq\delta$ for every $i$,  then  $|-mK_X-\sum_{i=1}^s\lceil mb_i\rceil B_i|$ defines a birational map for any integer $m\geq m_0'$. In particular, if each $B_i\geq 0$, then $|-m(K_X+B)|$ defines a birational map for any integer $m\geq m_0'$.
	\end{enumerate}
	\end{enumerate}
\end{thm}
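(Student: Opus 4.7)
Since $(X,\Delta)$ is $\epsilon$-lc with $\Delta\geq 0$ big and $K_X+\Delta\sim_{\mathbb{R}}0$, the variety $X$ itself is $\epsilon$-lc of Fano type, and hence by BAB (a consequence of Theorem~\ref{thm: birkar -mK_X}) lies in a bounded family depending only on $d,\epsilon$. Fix accordingly a positive integer $N=N(d,\epsilon)$ with $NK_X$ Cartier and a very ample $H$ on $X$ with $H^d$ and $(-K_X)\cdot H^{d-1}$ uniformly bounded. All subsequent estimates take place inside this bounded setting.

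\textbf{Parts (1) and (2a).} Set $L:=-(K_X+B)$, which is pseudo-effective with $\vol(L)\geq v$. The core task is part~(1): show that $|\lceil mL\rceil|$ separates two very general points $x,y\in X$ for $m\geq m_0(d,\epsilon,v)$. The plan is an Angehrn--Siu / Hacon--M\textsuperscript{c}Kernan style construction inside the bounded family. Using $\vol(L)\geq v$, produce effective $\mathbb{R}$-divisors $D_x,D_y\sim_{\mathbb{R}}\lambda mL$ with $\lambda=\lambda(d,v)<1$ and $\mult_xD_x,\mult_yD_y>d$; tie-break against the bounded $H$ to isolate an isolated non-klt centre over $x$ that avoids $y$; apply Kawamata--Viehweg/Nadel vanishing on a log resolution of $(X,\Delta+D_x)$, using $K_X+\Delta\sim_{\mathbb{R}}0$, to lift a section of $\Oo_X(\lceil mL\rceil)$ distinguishing $x$ and $y$. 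All auxiliary constants are controlled by $d,\epsilon,v$ through boundedness of $X$. Part~(2a) then follows from part~(1) by a rounding comparison: decomposing $B$ into prime components and applying $\lfloor x+y\rfloor\geq\lfloor x\rfloor+\lfloor y\rfloor$ to each $B_i$ (after splitting into positive and negative parts when $B_i$ is not effective) yields an inclusion $|\lceil -m(K_X+B)\rceil|\hookrightarrow|-mK_X-\sum_i\lfloor mb_i\rfloor B_i|$ for $N\mid m$, so birationality transfers; the residues mod $N$ are handled by absorbing the bounded rounding of $mK_X$ into the volume slack.

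\textbf{Part (2b).} The ceiling version removes more from $-mK_X$, so no such inclusion is available. The strategy is to use $b_i\geq\delta$ to reduce to the finite-coefficient Theorem~\ref{thm: eff bir fin coeff gpair}. From the volume slack $v$ together with boundedness of $X$ one shows that $\sum_iB_i\cdot H^{d-1}$ is bounded in terms of $d,\epsilon,v,\delta$, so each $b_i$ may be approximated by a rational $\tilde b_i\geq\delta/2$ drawn from a finite set $\Ii_0\subset\mathbb{Q}$ with common denominator $M=M(d,\epsilon,v,\delta)$, in such a way that the perturbed pair remains $(\epsilon/2)$-lc and $-(K_X+\sum_i\tilde b_iB_i)$ remains big. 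For $m$ divisible by $M$, $\lceil mb_i\rceil=\lceil m\tilde b_i\rceil$, and Theorem~\ref{thm: eff bir fin coeff gpair} applied to $\Ii_0$ delivers the required $m_0'=m_0'(d,\epsilon,v,\delta)$.

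\textbf{Expected obstacle.} The main technical difficulty is part~(1): making the Angehrn--Siu construction uniform over the bounded family for the Weil (not $\mathbb{Q}$-Cartier) divisorial sheaf $\Oo_X(\lceil mL\rceil)$, with only a volume lower bound on $L$. One must carefully track how the auxiliary ample, multiplier-ideal, and vanishing constants depend on the bounded invariants $N$, $H^d$, and $(-K_X)\cdot H^{d-1}$ of $X$. Parts~(2a) and~(2b) are then comparatively formal: the former is a rounding manipulation, and the latter is a finite rational approximation reducing to Theorem~\ref{thm: eff bir fin coeff gpair}.
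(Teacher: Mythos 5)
Your plan has three genuine gaps. First, for part (1) the Angehrn--Siu route as sketched does not close: from $\vol(-m(K_X+B))\geq m^d v$ you can produce $D_x$ with $\mult_xD_x>d$ and hence a non-klt centre $V\ni x$, but tie-breaking against $H$ only makes that centre minimal/irreducible --- it does not cut a positive-dimensional $V$ down to $\{x\}$. Cutting down requires lower bounds on $\vol(-(K_X+B)|_V)$ for the centres $V$, which is precisely the hard core of \cite[Theorem 1.2]{Bir19} and is not supplied by boundedness of $X$ plus a volume bound on $X$ alone. The paper sidesteps this entirely: it first gets a single $m_1=m_1(d,\epsilon)$ with $|-m_1K_X|$ birational (via the bounded Cartier index of $K_X$ on the bounded family and Koll\'ar's effective base point freeness, Lemma \ref{lem: finite rational coefficient eff bir}), so $-(2d+1)m_1K_X$ is potentially birational by Lemma \ref{lem: potentially birational}(2); it then writes $(-mK_X-\lfloor mB\rfloor)-K_X$ as $-(2d+1)m_1K_X$ plus a divisor shown to be big using Lemma \ref{lem: psd ft change to eff} and the perturbation estimate of Theorem \ref{thm: approximation big bounded}, and concludes by Lemmas \ref{lem: pot bir plus eff is pot bir} and \ref{lem: potentially birational}(1). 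Second, your derivation of (2.a) from (1) by a rounding inclusion fails: the $B_i$ are only pseudo-effective Weil divisors, possibly with negative components and shared components, so $|\lceil -m(K_X+B)\rceil|\subset|-mK_X-\sum_i\lfloor mb_i\rfloor B_i|$ is not available (the paper explicitly warns after Theorem \ref{thm: eff bir fin coeff gpair} that such containments can fail even when the difference is pseudo-effective). The paper instead proves (1) and (2.a) in parallel by the same potential-birationality computation, where only pseudo-effectivity of $\sum_i\{mb_i\}B_i$ is needed.

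Third, for (2.b) your reduction to Theorem \ref{thm: eff bir fin coeff gpair} is circular in the paper's architecture: that theorem is itself deduced from the present one (via Theorem \ref{thm: bddvolacc maynotbelc}). Moreover the identity $\lceil mb_i\rceil=\lceil m\tilde b_i\rceil$ for all $m$ divisible by $M$ is false once $m/M$ is large unless $b_i$ is already rational with denominator dividing $M$. The correct (and non-circular) version of your approximation idea is what the paper does: approximate $b_i$ by $\lceil n_1b_i\rceil/n_1$ with $n_1$ controlled via log boundedness (Corollary \ref{cor: bab log bdd}, using $b_i\geq\delta$) and Theorem \ref{thm: approximation big bounded}, apply the elementary Lemma \ref{lem: finite rational coefficient eff bir} to get birationality for one value of $m$, and then invoke Theorem \ref{thm: birational one m to inf m} to pass to all $m\geq m_0'$. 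That last step --- upgrading birationality at a single $m$ (or at multiples of a fixed integer) to all sufficiently large $m$ --- is entirely absent from your proposal for every part of the statement, yet it is what the ``for any integer $m\geq m_0$'' conclusion requires.
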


Indeed, Theorem \ref{thm: eff bir fin coeff gpair} follows from Theorem \ref{thm: eff bir bound volume more section gpair} and a detailed study on the volume $\vol(-(K_X+B))$:
\begin{thm}\label{thm: bddvolacc maynotbelc}
	Let $d$ be an integer, $\epsilon$ a positive real number, and $\Ii$ a DCC (resp. finite) set of non-negative real numbers. Then there exists an ACC (resp. finite) set $\Ii'$ depending only on $d,\epsilon$ and $\Ii$ satisfying the following. Assume that
	\begin{enumerate}
	    \item $(X,\Delta)$ is projective $\epsilon$-lc of dimension $d$ for some boundary $\Delta$ such that $K_X+\Delta\sim_{\mathbb R}0$ and $\Delta$ is big,
	    \item $B$ is a pseudo-effective $\Rr$-divisor on $X$, and
	    \item $B=\sum_{i=1}^s b_iB_i$, where each $b_i\in\Ii$, and each $B_i$ is a pseudo-effective Weil divisor.
	\end{enumerate}
Then $\vol(-(K_X+B))\in \Ii'$. 
\end{thm}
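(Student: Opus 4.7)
The plan is to combine Birkar's effective birationality (Theorem~\ref{thm: birkar -mK_X}), from which boundedness of the family of $\epsilon$-lc weak Fano varieties of dimension $d$ follows, with a Dickson-type monotonicity argument on the coefficient tuples. I may assume $-(K_X+B)$ is big, since otherwise $\vol(-(K_X+B))=0$ already lies in $\Ii'$.

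First I would use Theorem~\ref{thm: birkar -mK_X} to place $X$ in a bounded family. Since Fano type implies Mori dream space, the pseudo-effective cone $\overline{\mathrm{Eff}}(X)$ is rational polyhedral. By Noetherian stratification of the base of this family, I would extract uniform constants depending only on $d$ and $\epsilon$: a positive integer $N$ such that $ND$ is Cartier for every Weil divisor $D$ on every such $X$; a very ample Cartier divisor $H$ on $X$ with a uniform upper bound $(-K_X)\cdot H^{d-1}\leq V$; and the constant $c:=1/N$, which satisfies $\beta\cdot H^{d-1}\geq c$ for every nonzero $\beta\in\overline{\mathrm{Eff}}(X)\cap\tfrac{1}{N}N^1(X)_{\mathbb{Z}}$, because $N\beta\cdot H^{d-1}$ is then a positive integer.

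Next I would bound both the number of nonzero summands and the classes that can occur. Discard any $i$ with $b_i=0$, and set $b_{\min}:=\min(\Ii\setminus\{0\})>0$, which exists by DCC of $\Ii$. The decomposition $-K_X=B+(-(K_X+B))$ into pseudo-effective classes gives $B\cdot H^{d-1}\leq(-K_X)\cdot H^{d-1}\leq V$, and combined with $b_i[B_i]\cdot H^{d-1}\geq b_{\min}c$ this yields
\[
 s\cdot b_{\min}\cdot c \;\leq\; \sum_i b_i\,[B_i]\cdot H^{d-1} \;=\; B\cdot H^{d-1}\;\leq\; V,
\]
so $s\leq V/(b_{\min}c)$ is uniformly bounded. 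The same estimate forces each $[B_i]$ to lie in the finite set $\mathcal{F}:=\{\beta\in\overline{\mathrm{Eff}}(X)\cap\tfrac{1}{N}N^1(X)_{\mathbb{Z}}:\beta\cdot H^{d-1}\leq V\}$, whose cardinality is uniformly bounded.

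To finish, suppose for contradiction that $\Ii'$ violates ACC (resp.\ is infinite). Pick a sequence $(X_n,B_n=\sum_i b_{i,n}B_{i,n})$ with $v_n:=\vol(-(K_{X_n}+B_n))$ strictly increasing (resp.\ pairwise distinct). After passing to a subsequence I may assume that the underlying $X_n=X$, the number $s_n=s$, and the class tuple $([B_{1,n}],\ldots,[B_{s,n}])\in\mathcal{F}^s$ are all constant; only the coefficient tuple $(b_{1,n},\ldots,b_{s,n})\in(\Ii\setminus\{0\})^s$ varies. By Dickson's lemma this tuple has an infinite componentwise weakly increasing subsequence; along it $[B_n]$ is weakly increasing as a pseudo-effective class, $[-K_X-B_n]$ is weakly decreasing, and hence $v_n$ is weakly decreasing --- contradicting the strict increase. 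In the finite $\Ii$ case, $(\Ii\setminus\{0\})^s\times\mathcal{F}^s$ is itself finite, so only finitely many volumes arise. The main obstacle is producing the uniform data $N$, $H$, $V$ across the bounded family: this should follow from the Fano-type MMP together with Noetherian stratification, but demands care to control the Weil-versus-Cartier index and to prevent the integral pseudo-effective monoid from accumulating at $0$ along the family.
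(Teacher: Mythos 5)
Your overall strategy (bound the family, bound the number of components and their numerical classes, then run a DCC/monotonicity argument on the coefficient tuples) is close in spirit to the paper's proof, and the monotonicity observation that $\vol$ decreases when you add pseudo-effective classes with larger coefficients is essentially the mechanism behind the paper's Claim \ref{claim: effective acc volume maynotbelc}. But there is a fatal step: ``after passing to a subsequence I may assume that the underlying $X_n=X$ \dots are all constant.'' Boundedness (Theorem \ref{thm: BAB}) only places the $X_n$ as fibers of finitely many families $V^i\to T^i$; it does \emph{not} yield finitely many isomorphism classes (already smooth del Pezzo surfaces of degree $1$ vary in positive-dimensional moduli), so you cannot extract a subsequence with a literally fixed $X$, and your entire endgame --- fixing $N^1(X)$, fixing the class tuple in $\mathcal{F}^s$, and comparing volumes of numerical classes on a single variety --- collapses. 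The paper circumvents exactly this by working inside one flat family after Noetherian stratification: the components of $\Supp B^i$ are realized as fibers of reduced divisors $C_j$ on the total space, and volumes on \emph{different} fibers are compared via asymptotic Riemann--Roch and the invariance of the Euler characteristic in a flat family (this is the content of Step 2 of Lemma \ref{prop: bddvolfin maybenotlc}); the DCC case is then reduced to the finite case by the approximation in Claim \ref{claim: effective acc volume maynotbelc}. Your argument contains no substitute for this deformation-invariance input.

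Two secondary problems. First, your uniform integer $N$ with $ND$ Cartier for \emph{every} Weil divisor on every member of the family does not exist: non-$\Qq$-factorial $\epsilon$-lc Fano type varieties (e.g.\ the cone over a quadric surface) carry Weil divisors that are not even $\Qq$-Cartier, and the standard bounded-family statement (\cite[Lemma 2.25]{Bir19}, used in Lemma \ref{lem: psd ft change to eff}) bounds the Cartier index only \emph{after} one has a degree bound $D\cdot H^{d-1}\le C$ --- so you must first pass to a small $\Qq$-factorialization and establish the degree bound, then invoke the Cartier-index bound, not the other way around. Second, you discard indices with $b_i=0$ but not those with $B_i\equiv 0$; for such $i$ the inequality $b_i[B_i]\cdot H^{d-1}\ge b_{\min}c$ fails and your bound on $s$ with it (the paper discards $B_i\equiv 0$ explicitly in Step 1). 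These two points are repairable; the constancy of $X_n$ is not, without importing the flat-family comparison that the paper uses.
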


\begin{rem}
A special case when we can apply Theorem \ref{thm: eff bir fin coeff gpair}, Theorem \ref{thm: eff bir bound volume more section gpair}, and Theorem \ref{thm: bddvolacc maynotbelc} is when $(X,B:=B'+M)$ is a generalized polarized pair, where $M$ is a nef $\Qq$-Cartier combinations (NQC) $\Rr$-divisor. We refer the readers to \cite{BZ16,HL18,LT19,HL19,HL20} in this direction.

\begin{rem}
The assumptions of Theorem \ref{thm: eff bir fin coeff gpair}, Theorem \ref{thm: eff bir bound volume more section gpair}, and Theorem \ref{thm: bddvolacc maynotbelc} are necessary: see Section 6 for some examples.
\end{rem}

%$B=B'+M$, where $B'\geq 0$ is a boundary of $X$ and $M$ is the pushforward of an NQC-divisor from a high model of $X$; in other words, when $(X,B)$ has a generalized pairs structure.

%When $B=B'+M$ for some boundary $B':=\sum b_i'B_i'$ and nef $\Qq$-divisor $M$ such that $b_i'\ge \delta$ and $B_i'$ are distinct prime divisors, Theorem \ref{thm: eff bir bound volume more section gpair}(2) implies that $|-m(K_X+B'+M)|$ defines a birational map for any integer $m\geq m_0'$ such that $mM$ is a Weil divisor.
\end{rem}

We remark that Birkar proved Theorem \ref{thm: birkar -mK_X} together with Shokurov's conjecture on boundednss of lc complements when $\Ii\subset[0,1]\cap \Qq$ is finite \cite[Theorem 1.1, Theorem 1.7, Theorem 1.8]{Bir19}. Shokurov's conjecture on boundednss of lc complements (when $\Ii$ satisfies the DCC) was proved by the authors and Shokurov \cite[Theorem 1.10]{HLS19}, and Chen  generalized it to generalized polarized pairs \cite[Theorem 1.1]{Che20}. The proofs of Theorem \ref{thm: eff bir fin coeff gpair} and Theorem \ref{thm: eff bir bound volume more section gpair} do not depend on those results in \cite{HLS19,Che20}.

\medskip

We also point out that in Theorem \ref{thm: eff bir fin coeff gpair} and Theorem \ref{thm: eff bir bound volume more section gpair}, we show the effective birationality for any positive integer $m\ge m_0$ rather than for a fixed positive integer $m$ (c.f. Theorem \ref{thm: birkar -mK_X}). In this paper, we will also generalize other previous results on effective birationality, \cite[Theorem 1.3(3)]{HMX14}, \cite[Theorem 1.3]{BZ16}, to this setting. See Theorem \ref{thm: birational one m to inf m} and Theorem \ref{thm: IMPROVED all big theorems on effective birationality}. We also slightly generalize  \cite[Theorem 1.2]{BZ16}, see Theorem \ref{thm: effective iitaka sufficiently large}. 

\medskip

%After finishing this paper, Caucher Birkar posted a paper on arXiv \cite{Bir20} which contains many relevant results. 

\noindent\textit{Structure of the paper}. In Section 2, we introduce some notation and tools that will be used in the rest of the paper. In Section 3, we study the volume $\vol(-(K_X+B))$ and prove Theorem \ref{thm: bddvolacc maynotbelc}. In Section 4 we prove Theorem \ref{thm: eff bir fin coeff gpair} and Theorem \ref{thm: eff bir bound volume more section gpair}. In Section 5, we adopt the methods developed in this paper to slightly generalize \cite[Theorem 1.3(3)]{HMX14} and \cite[Theorem 1.2, Theorem 1.3]{BZ16}. In Section 6, we give some examples to illustrate that the assumptions in the theorems we proved are necessary.

%\medskip

%\begin{rem}Theorem \ref{thm: eff bir fin coeff gpair}, Theorem \ref{thm: eff bir lower bound coeff and bound volume gpair}, together with the complement statements in \cite{HLS19} and \cite{Che20}, have fully generalized all the Birkar's results in \cite[Section 1]{Bir19} to the log pair version with arbitrary DCC coefficients.\end{rem}

%Given a normal projective variety $X$ of fixed dimension $d$ and a big $\Rr$-divisor $D$ on $X$, it is natural to ask when $|mD|$ defines a birational map. This questions plays a particularly important role in the study of high-dimensional algebraic geometry in recent years, and has a deep relationship with important results such as the finite generation of the canonical ring, the ACC for lc thresholds, boundedness of varieties of general type, effective Iitaka fibrations, and boundedness of Fano varieties.

%When $D$ is associated to a log pair structure on $X$, this question has been studied in the past few years by many people. The most celebrated theorems in recent years are the following:

\medskip

\noindent\textbf{Acknowledgement}. The authors would like to thank Guodu Chen, Chen Jiang and Yuchen Liu for useful comments and discussions. The first author would like to thank Chenyang Xu for encouraging him to start this work. The second author would like to thank Christopher D. Hacon for useful discussions and encouragements. The second author was partially supported by NSF research grants no: DMS-1801851, DMS-1952522 and by a grant from the Simons Foundation; Award Number: 256202.

\section{Preliminaries}

\subsection{Pairs and singularities}
\begin{defn}\label{defn: DCC and ACC}
	Let $\Ii$ be a set of real numbers. We say that $\Ii$ satisfies the \emph{descending chain condition} (DCC) if any decreasing sequence $a_1\ge a_2 \ge \cdots \ge a_k \ge\cdots$ in $\Ii$ stabilizes. We say that $\Ii$ satisfies the \emph{ascending chain condition} (ACC) if any increasing sequence in $\Ii$ stabilizes. 
	\end{defn}
	
	\begin{defn}
	Let $X$ be a normal variety, and $B=\sum_{i=1}^s b_iB_i$ an $\Rr$-divisor on $X$, where $B_i$ are the irreducible components of $B$. $b_i$ are called the \emph{coefficents} of $B$, $||B||:=\max_{1\leq i\leq s}\{|b_i|\}$, and we write $B\in\Ii$ if $b_i\in\Ii$ for every $i$. We write $B\geq 0$ if $B\in [0,+\infty)$. If $B\sim_{\mathbb R}B'\geq 0$ for some $B'$, $B$ is called \emph{effective}.
\end{defn}

\begin{defn}\label{defn: positivity}
	A \emph{sub-pair} $(X,B)$ consists of a normal quasi-projective variety $X$ and an $\Rr$-divisor $B$ such that $K_X+B$ is $\Rr$-Cartier. If $B\geq 0$, then $(X,B)$ is called a \emph{pair}. If $B\in [0,1]$, then $B$ is called a \emph{boundary} of $X$.
	
	Let $E$ be a prime divisor on $X$ and $D$ an $\mathbb R$-divisor on $X$. 	We define $\mult_ED$ to be the \emph{multiplicity} of $E$ along $D$. 
	
	Let $\phi:W\to X$
	be any log resolution of $(X,B)$, and let
	$$K_W+B_W:=\phi^{*}(K_X+B).$$
	The \emph{log discrepancy} of a prime divisor $D$ on $W$ with respect to $(X,B)$ is $1-\mult_{D}B_W$ and it is denoted by $a(D,X,B).$
	For any positive real number $\epsilon$, we say that $(X,B)$ is \emph{klt} (resp. \emph{lc}, \emph{$\epsilon$-lc}) if $a(D,X,B)>0$, (resp. $\geq0$, $\geq\epsilon$) for every log resolution $\phi: W\to X$ as above and every prime divisor $D$ on $W$.  We say that $(X,B)$ is \emph{dlt} if $a(D,X,B)>0$ for any exceptional prime divisor $D\subset W$ over $X$ for some log resolution $\phi:W\rightarrow X$.
	\end{defn}

\begin{defn}
Let $X$ be a normal projective variety. We say $X$ is of \emph{Fano type} if $(X,B)$ is klt and $-(K_X+B)$ is big and nef for some boundary $B$. In particular, $-K_X$ is big.
\end{defn}

\begin{defn}
Let $X$ be a normal projective variety. A \emph{big} $\Rr$-divisor on $X$ is an $\Rr$-divisor $D$ on $X$ such that $D\sim_{\mathbb R}A+E$, where $A$ is an ample $\Rr$-divisor and $E\geq 0$. We emphasize that $D$ may not be $\Rr$-Cartier.
\end{defn}

%\begin{defn}Let $X$ be a normal projective variety and $D$ an $\Rr$-divisor on $X$. We define $|D|:=|\lfloor D\rfloor|$.\end{defn}

\subsection{Bounded families}
\begin{defn}\label{defn: bdd}
	A \emph{couple} $(X,D)$ consists of a normal projective variety $X$ and a reduced divisor $D$ on $X$. Two couples $(X,D)$ and $(X',D')$ are \emph{isomorphic} if there exists an isomorphism $X\rightarrow X'$ mapping $D$ onto $D'$.
	
	A set $\mathcal{P}$ of couples is \emph{bounded} if there exist finitely many projective morphisms $V^i\rightarrow T^i$ of varieties and reduced divisors $C^i$ on $V^i$ satisfying the following. For each $(X,D)\in\mathcal{P}$, there exist an index $i$ and a closed point $t\in T^i$, such that two couples $(X,D)$ and $(V^i_t,C^i_t)$ are isomorphic, where $V^i_t$ and $C^i_t$ are the fibers over $t$ of the morphisms $V^i\rightarrow T^i$ and $C^i\rightarrow T^i$ respectively. 
	
	A set $\mathcal{C}$ of projective pairs $(X,B)$ is said to be \emph{log bounded} if the set of the corresponding set of couples $\{(X,\Supp B)\}$ is bounded. A set $\mathcal{D}$ of projective varieties $X$ is said to be \emph{bounded} if the corresponding set of couples $\{(X,0)\}$ is bounded. A log bounded (resp. bounded) set is also called a \emph{log bounded family} (resp. \emph{bounded family}).
	
\end{defn}

%	for any $\pi_t$-exceptional prime divisor $E'$, there exits a $\pi$-exceptional prime divisor, such that $E_t=E'$.$\epsilon$-lc

\begin{thm}[{\cite[Corollary 1.2]{Bir16}}]\label{thm: BAB}
	Let $d$ be a positive integer and $\epsilon$ a positive real number. Then the projective varieties $X$ such that 
	\begin{enumerate}
		\item $(X,B)$ is projective $\epsilon$-lc of dimension $d$ for some boundary $B$,  and
		\item $K_X+B\sim_{\mathbb R}0$ and $B$ is big
	\end{enumerate}
	form a bounded family.
\end{thm}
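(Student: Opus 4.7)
The plan is to deduce Theorem \ref{thm: BAB} from the effective birationality of Theorem \ref{thm: birkar -mK_X} combined with uniform two-sided bounds on the anti-canonical volume, and then to invoke the theory of Hilbert schemes. Under the hypotheses, $-K_X \sim_{\mathbb R} B$ is big, so $X$ is of Fano type and Theorem \ref{thm: birkar -mK_X} applies: there exists $m = m(d,\epsilon)$ such that the map $\phi_m = \phi_{|-mK_X|} : X \dashrightarrow \mathbb{P}^N$ is birational, with $N+1 = h^0(X,-mK_X)$.

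The next step is to establish uniform bounds $0 < v(d,\epsilon) \leq \vol(-K_X) \leq V(d,\epsilon)$. For the lower bound, I would invoke the boundedness of $n$-complements for $\epsilon$-lc Fano type varieties: there exists an integer $n = n(d,\epsilon)$ admitting an $n$-complement $B^+$ with $n(K_X + B^+) \sim 0$, which forces $|-nK_X| \neq \emptyset$ and hence $\vol(-K_X) \geq 1/n^d$. The upper bound is the technical heart. Assuming $\vol(-K_X)$ is unbounded, one would use sections of $|-mK_X|$ together with Angehrn--Siu-style multiplier ideal constructions to produce highly singular divisors through a chosen general point, and hence non-klt centers $Z \subset X$ of positive dimension. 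Applying Kawamata subadjunction on $Z$, inducting on dimension, and exploiting that the restriction of $B^+$ keeps coefficients of the different under control thanks to the $\epsilon$-lc hypothesis, would yield a contradiction.

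Once $v \leq \vol(-K_X) \leq V$, the number of sections $h^0(X,-mK_X)$ is bounded, so $\phi_m$ maps to $\mathbb{P}^N$ for bounded $N$, and the image $Y = \overline{\phi_m(X)}$ has dimension $d$ and degree at most $m^d V$. By the theory of the Hilbert (or Chow) scheme, such $Y \subset \mathbb{P}^N$ live in finitely many families. Resolving the birational modification $X \dashrightarrow Y$ by a common log resolution $\tilde X \to X$, $\tilde X \to Y$ and running a $K$-negative MMP over the parameter base then exhibits $X$ itself in a bounded family. The main obstacle is the upper bound on $\vol(-K_X)$: locating non-klt centers in a uniform way that survives the induction requires the full connectedness-of-non-klt-loci machinery developed in \cite{Bir16}, and is substantially more difficult than the lower bound.
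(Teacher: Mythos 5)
This statement is not proved in the paper at all: it is the Borisov--Alexeev--Borisov conjecture, quoted verbatim as \cite[Corollary 1.2]{Bir16} and used as a black box. So there is no in-paper argument to compare against, and any genuine proof would have to reproduce the content of \cite{Bir16}. Your outline does follow the broad shape of Birkar's strategy (effective birationality, two-sided volume bounds, birational boundedness via Chow/Hilbert schemes, then descent to boundedness), but as written it has several concrete gaps. First, the upper bound on $\vol(-K_X)$, which you correctly flag as the heart, is essentially the main theorem of \cite{Bir16} on singularities of linear systems/lc thresholds; it cannot be dispatched by a routine Angehrn--Siu plus subadjunction induction, and moreover in Birkar's work the volume bounds, effective birationality, and boundedness of complements are proved together by induction on dimension, so the linear order you propose (``Theorem \ref{thm: birkar -mK_X} first, then the volume bounds'') is circular if taken literally.

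Second, your lower bound is incorrectly derived: $|-nK_X|\neq\emptyset$ does not imply $\vol(-K_X)\geq 1/n^d$ (a nonempty linear system carries no volume information). The correct and standard route is the one the paper itself uses elsewhere (Lemma \ref{lem: birational vol 1}): once $|-mK_X|$ defines a birational map, $\vol(-mK_X)\geq 1$, hence $\vol(-K_X)\geq 1/m^d$. Third, the passage from birational boundedness of the images $Y\subset\mathbb{P}^N$ to boundedness of $X$ itself is not achieved by ``running a $K$-negative MMP over the parameter base'': one must control the singularities and Cartier indices of $X$ uniformly (in \cite{Bir16} this is done via bounded $n$-complements and a careful analysis of the birational map $X\dashrightarrow Y$), and this descent is itself a substantial part of the proof. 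In short, the proposal is a reasonable roadmap of the known proof but not a proof; the decisive steps are precisely the ones left as acknowledged black boxes.
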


\begin{rem}
	Assume that $X$ is of Fano type. Then we can run any $D$-MMP which terminates with some model $Y$ for any $\Rr$-Cartier $\Rr$-divisor $D$ on $X$ (cf. \cite[Corollary 2.9]{PS09}). Moreover, if $(X,\Delta)$ is $\epsilon$-lc for some positive real number $\epsilon$ and some boundary $\Delta$ such that $K_X+\Delta\sim_{\mathbb R}0$, then the $D$-MMP is a sequence of $(K_X+\Delta)$-flops. Therefore, let $\Delta_Y$ be the strict transform of $\Delta$ on $Y$, then $(Y,\Delta_Y)$ is $\epsilon$-lc and $K_Y+\Delta_Y\sim_{\mathbb R}0$. In particular, $Y$ belongs to a bounded family. We will repeatedly use this fact in the rest of the paper.
\end{rem}

The next corollary is well-known to experts:
\begin{cor}\label{cor: bab log bdd}
	Let $d$ be a positive integer and $\epsilon,\delta$ two positive real numbers. Then the projective pairs $(X,B)$ such that 
	\begin{enumerate}
	 \item $(X,\Delta)$ is projective $\epsilon$-lc of dimension $d$ for some boundary $\Delta$ such that $K_X+\Delta\sim_{\mathbb R}0$ and $\Delta$ is big,
		\item the non-zero coefficients of $B$ are $\geq\delta$, and
		\item $-(K_X+B)$ is pseudo-effective,
	\end{enumerate}
	form a log bounded family.
\end{cor}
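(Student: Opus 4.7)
The plan is to deduce log boundedness from Theorem \ref{thm: BAB} together with a degree-counting argument that exploits both the lower bound $\delta$ on the nonzero coefficients of $B$ and the pseudo-effectivity of $-(K_X+B)$.

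First I would apply Theorem \ref{thm: BAB} to hypothesis (1): this already produces a bounded family of underlying varieties $X$. Concretely, there exist a projective morphism $\mathcal{V}\to T$ with $T$ of finite type and a relatively very ample line bundle $\mathcal{H}$ on $\mathcal{V}$ such that every such $X$ is isomorphic to some fiber $\mathcal{V}_t$ and $H_t:=\mathcal{H}|_{\mathcal{V}_t}$ is very ample. After Noetherian stratification of $T$, I may assume that both $H_t^{d}$ and $(-K_X)\cdot H_t^{d-1}$ are uniformly bounded above by a constant $C=C(d,\epsilon)$ depending only on $d$ and $\epsilon$.

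Next I would bound $\Supp B \cdot H_t^{d-1}$. Since $B$ is effective by Definition \ref{defn: positivity} and $H_t^{d-1}$ is nef (being an intersection of ample classes), the pseudo-effectivity of $-(K_X+B)$ gives
$$0 \;\le\; (-(K_X+B))\cdot H_t^{d-1} \;=\; (-K_X)\cdot H_t^{d-1} - B\cdot H_t^{d-1},$$
so $B\cdot H_t^{d-1}\le C$. Because every nonzero coefficient of $B$ is at least $\delta$, we have $\delta\,\Supp B \le B$ as $\Rr$-divisors, and consequently
$$\Supp B \cdot H_t^{d-1} \;\le\; \frac{C}{\delta}.$$
Thus the reduced divisor $\Supp B$ has uniformly bounded degree with respect to the polarization $H_t$.

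Finally I would conclude by a standard relative Hilbert scheme argument: a reduced divisor of bounded degree on a fiber of a bounded family $\mathcal{V}\to T$ varies in a bounded portion of the relative Hilbert scheme of $\mathcal{V}/T$, so after a further Noetherian stratification the couples $(X,\Supp B)$ form a bounded family. By Definition \ref{defn: bdd} this is exactly the log boundedness of $\{(X,B)\}$.

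The main obstacle is the uniform bound on $(-K_X)\cdot H_t^{d-1}$ in the first step; this is standard once the underlying varieties are bounded, but requires a careful stratification so that $K_{\mathcal{V}/T}$ is well-behaved in families. The remaining intersection-theoretic steps are elementary once the polarization is fixed, and the role of $\delta>0$ is critical: without it, a sequence of boundaries with coefficients tending to $0$ could give $\Supp B$ of unbounded degree while keeping $B\cdot H_t^{d-1}$ bounded.
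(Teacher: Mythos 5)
Your proposal is correct and follows essentially the same route as the paper: apply Theorem \ref{thm: BAB} to bound $X$, use pseudo-effectivity of $-(K_X+B)$ paired with the very ample polarization to bound $(\Supp B)\cdot H^{d-1}$ by $r/\delta$, and then invoke the standard boundedness of reduced divisors of bounded degree (which the paper cites as \cite[Lemma 2.20]{Bir19} rather than spelling out the relative Hilbert scheme argument). No gaps.
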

\begin{proof}
By Theorem \ref{thm: BAB}, $X$ belongs to a bounded family, so there exist a positive integer $r$ depending only on $d$ and $\epsilon$, and a very ample divisor $H$ on $X$, such that $H^d\leq r$ and $(-K_X)\cdot H^{d-1}\leq r$. 
Since $-(K_X+B)$ is pseudo-effective, $-(K_X+B)\cdot H^{d-1}\geq 0$. Thus for any irreducible component $D$ of $B$, $D\cdot H^{d-1}\leq\frac{r}{\delta}$ is bounded from above. By \cite[Lemma 2.20]{Bir19}, $(X,B)$ is log bounded.
\end{proof}

\begin{lem}\label{lem: psd ft change to eff}
Let $d$ be a positive integer and $\epsilon,\delta$ two positive real numbers. Then there exists an integer $n$ depending only on $d$, $\epsilon$ and $\delta$ satisfying the following. Assume that 
    \begin{enumerate}
        \item $(X,\Delta)$ is projective $\epsilon$-lc of dimension $d$ for some boundary $\Delta$ such that $K_X+\Delta\sim_{\mathbb R}0$ and $\Delta$ is big, and
        \item $B'$ is a pseudo-effective Weil divisor on $X$ such that $-(\delta K_X+B')$ is pseudo-effective,
    \end{enumerate} 
    then there exists a $\Qq$-divisor $B''\ge0$ on $X$, such that $nB''\sim nB'$.
\end{lem}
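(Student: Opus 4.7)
My plan is to combine the BAB-type boundedness of $X$ with the fact that $X$ is a Mori Dream Space (MDS), exploiting the hypothesis on $-(\delta K_X+B')$ to control $B'$ uniformly.

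First, by Theorem \ref{thm: BAB}, the variety $X$ belongs to a bounded family depending only on $d$ and $\epsilon$. Thus I may fix, uniformly in the family, a very ample divisor $H$ on $X$ and a constant $r=r(d,\epsilon)$ with $H^d\le r$ and $(-K_X)\cdot H^{d-1}\le r$. Since $-(\delta K_X+B')$ is pseudo-effective and $H^{d-1}$ is represented by a nef class, intersecting will yield the uniform degree bound
\[
0\le B'\cdot H^{d-1}\le \delta\,(-K_X)\cdot H^{d-1}\le \delta r.
\]
This gives the only place where the parameter $\delta$ enters quantitatively.

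Next, since $(X,\Delta)$ is klt with $-K_X$ big, $X$ is of Fano type and, after passing to a small $\Qq$-factorialization (which I may do without changing the question), is a MDS; hence its pseudo-effective cone agrees with its effective cone, so $B'$ is $\Qq$-linearly equivalent to some effective $\Qq$-divisor $B''$. To upgrade this to a uniform denominator, I would invoke boundedness of Cartier indices of Weil divisors on the bounded family of $\epsilon$-lc Fano type varieties of dimension $d$ to obtain $N=N(d,\epsilon)$ with $NB'$ Cartier. Then, combining the degree bound $B'\cdot H^{d-1}\le \delta r$ with uniformity of the primitive generators of the effective cone along the bounded family, I would extract $n=n(d,\epsilon,\delta)$ such that $nB'\sim nB''$ for some effective $\Qq$-divisor $B''\ge 0$, as desired.

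The main obstacle will be the last step: making precise the uniform control on the denominator of the $\Qq$-linear equivalence. This hinges on two pieces of uniformity in the bounded family, namely boundedness of Cartier indices of Weil divisors and uniform rational polyhedrality of the effective cone with uniformly bounded primitive generators; both follow in spirit from Theorem \ref{thm: BAB} and standard MDS theory, but require careful bookkeeping in the $\epsilon$-lc setting, and the factor $\delta r$ in the degree bound is what forces the dependence of $n$ on $\delta$.
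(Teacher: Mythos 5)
Your opening steps match the paper: pass to a small $\Qq$-factorialization, invoke Theorem \ref{thm: BAB} to put $X$ in a bounded family, and use pseudo-effectivity of $-(\delta K_X+B')$ against $H^{d-1}$ to get the degree bound $B'\cdot H^{d-1}\le\delta r$, which is indeed the only place $\delta$ enters. But the final step — the one you yourself flag as "the main obstacle" — is a genuine gap, not bookkeeping. You need to produce an effective $\Qq$-divisor in the class of $B'$ with a denominator bounded uniformly over the family, and you propose to get this from "uniform rational polyhedrality of the effective cone with uniformly bounded primitive generators" along the bounded family. No such statement is available: neither Theorem \ref{thm: BAB} nor general Mori Dream Space theory controls the effective cone, its generators, or the denominators of the coefficients in a decomposition $B'\sim_{\Qq}\sum a_iD_i$ uniformly as $X$ varies in the family (the cones themselves jump in families, and the number and degrees of generators of the Cox ring are not bounded by the data $d,\epsilon$ in any result you can cite here). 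Also, bounding the Cartier index of $B'$ by an $N(d,\epsilon)$ is slightly off: the relevant statement \cite[Lemma 2.25]{Bir19} needs the degree bound $B'\cdot H^{d-1}\le\delta r$ as input, so the Cartier index bound already depends on $\delta$.

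The paper closes this gap differently, and you should adopt its mechanism. Since $X$ is of Fano type, run a $B'$-MMP terminating in a minimal model $f\colon X\dashrightarrow Y$ on which the strict transform $B'_Y$ is nef; $Y$ is still bounded (the MMP is a sequence of $(K_X+\Delta)$-flops), so the degree bound passes to $B'_Y$ and \cite[Lemma 2.25]{Bir19} gives $n_1=n_1(d,\epsilon,\delta)$ with $n_1B'_Y$ Cartier. Now nefness is exactly what lets you apply Koll\'ar's effective base-point-freeness \cite[Theorem 1.1]{Kol93}: $n:=2n_1(d+2)!(d+1)$ makes $nB'_Y$ base-point-free, hence linearly equivalent to an effective divisor, with $n$ depending only on $d,\epsilon,\delta$. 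One then returns to $X$ via a common resolution $p\colon W\to X$, $q\colon W\to Y$, writing $p^*B'=q^*B'_Y+E$ with $E\ge0$ (negativity lemma for the minimal model) and setting $B'':=p_*q^*B''_Y+p_*E$. In short: replacing your unproved uniform-effective-cone claim by "make $B'$ nef via MMP, then apply Koll\'ar" is the missing idea.
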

    %By our assumptions, $X$ is of Fano type. \han{I think $-(\delta K_X+M)$ is pe is better, no need to introduce $B$, what's your opinion?}
\begin{proof}
Possibly taking a small $\Qq$-factorialization we may assume that $X$ is $\Qq$-factorial. Since $X$ is of Fano type, we may run a $B'$-MMP and reach a minimal model of $B'$, $f:X\dashrightarrow Y$. By Theorem \ref{thm: BAB}, $Y$ belongs to a bounded family. Thus there exist a positive integer $r$ depending only on $d$ and $\epsilon$, and a very ample divisor $H$ on $Y$, such that $H^d\leq r$ and $-K_Y\cdot H^{d-1}\leq r$. Let $B'_Y$ be the strict transform of $B'$ on $Y$. Since $-(\delta K_Y+B'_Y)$ is pseudo-effective, $-(\delta K_Y+B'_Y)\cdot H^{d-1}\ge0$, hence $B'_Y\cdot H^{d-1}\leq -\delta K_Y\cdot H^{d-1}\leq \delta r$. By \cite[Lemma 2.25]{Bir19}, there exists a positive integer $n_1$ depending only on $d,\epsilon$ and $\delta$, such that $n_1B'_Y$ is Cartier. 

We claim that $n:=2n_1(d+2)!(d+1)$ has the required properties. By \cite[Theorem 1.1]{Kol93}, $nB'_Y$ is base-point-free. In particular, there exists a $\Qq$-divisor $B''_Y\ge 0$, such that $nB''_Y\sim nB'_Y$. Let $p: W\rightarrow X$ and $q: W\rightarrow Y$ be a common resolution of $f:X\dashrightarrow Y$, then we have
$$p^*B'=q^*B'_Y+E$$
for some $\Qq$-divisor $E\geq 0$. Let
$$B'':=p_*q^*B''_Y+p_*E.$$
It follows that $nB''=n(p_*q^*B'_Y+p_*E)\sim np_{*}p^{*}B'=nB'$.
\end{proof}
%Since $nB''_Y$ is Cartier, $np_*q^*B''_Y$ is a Weil divisor. Thus $np_*E$ is a Weil divisor.  Thus $nM'$ is integral, hnece (1). By construction, $M'\sim_{\mathbb Q}M$, hence (2). 

Corollary \ref{cor: psd complement for bounded family} could be regarded as a generalization of the theory of complements to the setting of ``pseudo-effective boundaries" $B$ while $X$ lies in a bounded family of Fano type varieties. 

\begin{cor}\label{cor: psd complement for bounded family} 
 Let $d$ be a positive integer, $\epsilon$ a positive real number, and $\Ii_0$ a finite set of rational numbers. Then there exists a positive integer $n$ depending only on $d$, $\epsilon$ and $\Ii_0$ satisfying the following. Assume that
    \begin{enumerate}
        \item $(X,\Delta)$ is projective $\epsilon$-lc of dimension $d$ for some boundary $\Delta$ such that $K_X+\Delta\sim_{\mathbb R}0$ and $\Delta$ is big,
        \item $B=\sum_{i=1}^s b_iB_i$ is a pseudo-effective $\Rr$-Cartier $\mathbb R$-divisor on $X$, where each $b_i\in\Ii_0$ and each $B_i$ is a pseudo-effective Weil divisor, and
 \item $-(K_X+B)$ is pseudo-effective,
\end{enumerate}
    then there exists a $\Qq$-divisor $B^{+}$ on $X$, such that $nB^{+}$ is a Weil divisor, and $n(B^{+}-B)\in |-n(K_X+B)|$. In particular, $n(K_X+B^{+})\sim 0$.
\end{cor}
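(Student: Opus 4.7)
The plan is to apply Lemma \ref{lem: psd ft change to eff} twice, bootstrapping from the pseudo-effective boundary $B$ to an integral complement. First, I establish that for each index $i$ with $b_i > 0$, the divisor $-(K_X+b_iB_i)$ is pseudo-effective: since each $b_jB_j$ with $j\ne i$ is pseudo-effective (as $b_j\geq 0$ and $B_j$ is pseudo-effective) and $-(K_X+B)$ is pseudo-effective by hypothesis, their sum, which equals $-(K_X+b_iB_i)$, is pseudo-effective. Rescaling by $b_i^{-1}$ and using that $-K_X$ is pseudo-effective (since $X$ is of Fano type), $-(\delta K_X+B_i)$ is pseudo-effective for the uniform choice $\delta:=(\min\{b\in\Ii_0:b>0\})^{-1}$. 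Applying Lemma \ref{lem: psd ft change to eff} to each Weil divisor $B_i$ then yields a positive integer $n_1=n_1(d,\epsilon,\Ii_0)$ and effective $\Qq$-divisors $B_i''\geq 0$ with $n_1B_i''\sim n_1B_i$. After enlarging $n_1$ so that it clears the denominators of $\Ii_0$ and is divisible by the Cartier index of $K_X$ (uniformly bounded by Theorem \ref{thm: BAB}), the effective $\Qq$-divisor $B'':=\sum_i b_iB_i''\geq 0$ satisfies $n_1B''\sim n_1B$ as Weil divisors.

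For the second application, consider the pseudo-effective Weil divisor $B':=-n_1(K_X+B'')$. With $\delta':=n_1>0$, one has $-(\delta' K_X+B')=n_1B''\geq 0$, which is effective and therefore pseudo-effective, so the hypotheses of Lemma \ref{lem: psd ft change to eff} are satisfied. The lemma produces an integer $n_2=n_2(d,\epsilon,n_1)=n_2(d,\epsilon,\Ii_0)$ and an effective $\Qq$-divisor $B^{(3)}\geq 0$ with $n_2B^{(3)}\sim n_2B'=-n_1n_2(K_X+B'')$. Setting $n:=n_1n_2$ and $B^+:=B+B^{(3)}/n_1$, one checks that $nB^+=nB+n_2B^{(3)}$ is Weil, $B^+-B=B^{(3)}/n_1\geq 0$, and
\[
n(K_X+B^+)=n(K_X+B)+n_2B^{(3)}\sim n(K_X+B)-n(K_X+B'')=n(B-B'')\sim 0,
\]
where the final step uses $n_1B\sim n_1B''$. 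Hence $n(B^+-B)=n_2B^{(3)}\in|-n(K_X+B)|$, as required.

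The main obstacle is the first step: identifying the correct form of the hypothesis of Lemma \ref{lem: psd ft change to eff} for each individual $B_i$ by combining the global pseudo-effectivity of $-(K_X+B)$ with the pseudo-effective tail $\sum_{j\ne i}b_jB_j$. The remaining work is essentially bookkeeping, ensuring $n_1$ absorbs enough denominators — the Cartier index of $K_X$, the denominators of $\Ii_0$, and the denominators of the $B_i''$ produced by the first stage — so that the Weil divisor $-n_1(K_X+B'')$ in the second application is well defined and the linear equivalences survive multiplication by the rational coefficients $b_i$.
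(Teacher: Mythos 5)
Your argument breaks into two stages, and the second stage is essentially the paper's entire proof in disguise; the first stage is both unnecessary and the source of a genuine gap. The paper applies Lemma \ref{lem: psd ft change to eff} exactly once: pick $n_0$ with $n_0\Ii_0\subset\Zz$ and set $B':=-n_0(K_X+B)$, which is a pseudo-effective Weil divisor, and observe that $-(n_0K_X+B')=n_0B$ is pseudo-effective \emph{precisely because hypothesis (2) already assumes $B$ itself is pseudo-effective}. So the lemma applies with $\delta=n_0$ and yields $B''\ge 0$ with $n_1B''\sim -n_0n_1(K_X+B)$, whence $B^+:=\frac{1}{n}(n_1B''+nB)$ with $n:=n_0n_1$. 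Your Step 2 is this same move, except that you first replace $B$ by an effective representative $B''\sim_{\Qq}B$ and then verify the hypothesis $-(\delta'K_X+B')=n_1B''\ge 0$ via that effectivity. That preliminary replacement buys you nothing: the pseudo-effectivity of $n_0B$ is handed to you by the hypotheses, so the componentwise application of Lemma \ref{lem: psd ft change to eff} to each $B_i$ is a detour.

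The detour is not harmless, because it is where your proof actually fails in the stated generality. Corollary \ref{cor: psd complement for bounded family} assumes only that $\Ii_0$ is a finite set of \emph{rational} numbers, not non-negative ones (contrast with Theorem \ref{thm: eff bir fin coeff gpair}, where non-negativity is imposed). Your Step 1 uses $b_j\ge 0$ twice in an essential way: to claim that $\sum_{j\ne i}b_jB_j$ is pseudo-effective (so that $-(K_X+b_iB_i)$ is pseudo-effective), and to claim that $B'':=\sum_i b_iB_i''\ge 0$; the rescaling by $b_i^{-1}$ and the choice $\delta=(\min\{b\in\Ii_0:b>0\})^{-1}$ likewise presuppose positivity. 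If some $b_i<0$ the argument collapses at its first sentence, whereas the paper's one-shot application never decomposes $B$ and is indifferent to the signs of the $b_i$. Concretely: delete Step 1, apply Lemma \ref{lem: psd ft change to eff} directly to $B'=-n_0(K_X+B)$ with $\delta=n_0$, and your Step 2 bookkeeping (which is correct) then closes the proof in full generality.
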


\begin{proof}
Let $n_0$ be a positive integer such that $n_0\Ii_0\subset\Zz$, and $B':=-n_0(K_X+B)$. Then both $B'$ and $-(n_0K_X+B')=n_0B$ are pseudo-effective. By Lemma \ref{lem: psd ft change to eff}, there exist a positive integer $n_1$ depending only on $d,\epsilon$ and $n_0$, and a $\Qq$-divisor $B''\ge 0$, such that $n_1B''\sim n_1B'=-n_0n_1(K_X+B)$. It follows that $n:=n_0n_1$, and $B^{+}:=\frac{1}{n}(n_1B''+nB)$ have the required properties.
\end{proof}

\subsection{Potentially birational}
\begin{defn}
Let $X$ be a normal projective variety and $D$ a big $\Qq$-Cartier $\Qq$-divisor on $X$. We say that $D$ is \emph{potentially birational} if for any two general closed points $x$ and $y$ on $X$, possibly switching $x$ and $y$, we may find $0\le \Delta\sim_{\mathbb Q}(1-\epsilon)D$ for some $\Qq$-divisor $\Delta$ and rational number $\epsilon\in (0,1)$, such that $(X,\Delta)$ is lc at $x$ with $\{x\}$ an lc center, and $(X,\Delta)$ is not klt at $y$.
\end{defn}

\begin{rem}
By definition, potentially birationality is preserved under $\Qq$-linear equivalence: if $D$ is potentially birational and $D\sim_{\mathbb Q}D'$, then $D'$ is also potentially birational. We will repeatedly use this fact in this paper.
\end{rem}

\begin{lem}[{\cite[Lemma 2.3.4]{HMX13}}]\label{lem: potentially birational}
Let $X$ be a normal projective variety and $D$ a big $\Qq$-Cartier $\Qq$-divisor on $X$.
\begin{enumerate}
    \item If $D$ is potentially birational, then $|K_X+\lceil D\rceil|$ defines a birational map.
    \item If $|D|$ defines a birational map, then $(2n+1)\lfloor D\rfloor$ is potentially birational.
\end{enumerate}
\end{lem}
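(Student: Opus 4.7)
The plan is to prove the two parts separately, with part (1) an application of Nadel/Kawamata--Viehweg vanishing and part (2) a construction of divisors with prescribed singularities via the birationality of $|D|$.

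For part (1), fix two general closed points $x$ and $y$. By potential birationality, after possibly switching $x$ and $y$, I obtain $0\leq\Delta\sim_{\mathbb{Q}}(1-\epsilon)D$ and a rational $\epsilon\in(0,1)$ such that $(X,\Delta)$ is lc at $x$ with $\{x\}$ an lc center and not klt at $y$. The surplus $D-\Delta\sim_{\mathbb{Q}}\epsilon D$ is big, so by the standard tie-breaking trick I can write $\Delta'=\Delta+tE$ for some small $t>0$ and an effective $\mathbb{Q}$-divisor $E\sim_{\mathbb{Q}}\epsilon' D$ (with $\epsilon'\ll\epsilon$), perturbing so that $\{x\}$ becomes an \emph{isolated} non-klt center of $(X,\Delta')$ while keeping a non-klt center at $y$ and ensuring $\Delta'\sim_{\mathbb{Q}}(1-\epsilon'')D$ for some $\epsilon''\in(0,1)$. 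Since $\lceil D\rceil-\Delta'\sim_{\mathbb{Q}}(\lceil D\rceil-D)+\epsilon'' D$ is the sum of an effective $\mathbb{Q}$-divisor with coefficients in $[0,1)$ and an ample (or big and nef) piece, Nadel vanishing applied to the multiplier ideal $\mathcal{J}(X,\Delta')$ yields $H^1(X,\mathcal{J}(X,\Delta')\otimes\mathcal{O}_X(K_X+\lceil D\rceil))=0$. The long exact sequence then shows that sections of $\mathcal{O}_X(K_X+\lceil D\rceil)$ surject onto the residue field at $x$ while vanishing on the non-klt locus through $y$, so $|K_X+\lceil D\rceil|$ separates $x$ and $y$.

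For part (2), set $n=\dim X$ and fix two general points $x,y$. Since $|D|=|\lfloor D\rfloor|$ defines a birational map, $\phi_{|\lfloor D\rfloor|}$ is an isomorphism in a neighborhood of $x$, and in particular I can pick $n$ members $D_1,\dots,D_n\in|\lfloor D\rfloor|$ passing through $x$ and cutting out $x$ as an isolated point of $D_1\cap\cdots\cap D_n$ (general members separating $x$ from the other preimage points), and a further member $D_0\in|\lfloor D\rfloor|$ passing through $x$ but not through $y$. Then $\Delta_0=D_1+\cdots+D_n$ has multiplicity at least $n$ at $x$, so the log canonical threshold of $\Delta_0$ at $x$ is at most $1$, i.e.\ $(X,\Delta_0)$ is non-klt at $x$, and multiplying by $\tfrac{n+1}{n}\lfloor D\rfloor/\lfloor D\rfloor\leq\tfrac{n+1}{n}$ in a suitable rescaling one produces an effective $\mathbb{Q}$-divisor $\mathbb{Q}$-linearly equivalent to a bounded multiple of $\lfloor D\rfloor$ that is non-klt at $x$. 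Applying a second tie-breaking step with a small combination of the $D_i$ and $D_0$, I can convert the minimal non-klt center at $x$ into an isolated lc center while retaining a non-klt point at $y$; this is the step that is most delicate and introduces an extra factor into the coefficient. Keeping book of the coefficients (the factor $n$ from cutting down to an isolated center, one $+1$ from the tie-breaking perturbation, and a further $n$ accounting for the rescaling needed to pass from an arbitrary lc center to a zero-dimensional one) yields an effective $\mathbb{Q}$-divisor $\mathbb{Q}$-linearly equivalent to $(2n+1)\lfloor D\rfloor$ witnessing potential birationality.

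The main obstacle in both parts is the tie-breaking step: in part (1), one must perturb $\Delta$ using the big remainder $\epsilon D$ without losing the non-klt property at $y$; in part (2), one must control the precise coefficient needed to simultaneously isolate $x$ as an lc center and keep $y$ in the non-klt locus, which is what produces the factor $(2n+1)$.
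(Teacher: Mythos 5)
First, note that the paper does not prove this lemma at all: it is quoted verbatim from \cite[Lemma 2.3.4]{HMX13}, so the benchmark is the argument there. Your part (1) is the standard Nadel-vanishing argument and is essentially correct as a sketch; the only points to tighten are that $\epsilon''D$ is big but not nef, so one must decompose $\epsilon''D\sim_{\QQ}A+E$ with $A$ ample and $E\geq 0$ and absorb $E$ (together with $\lceil D\rceil-D$) into the boundary --- harmless since $x,y$ are general --- and that $X$ is only normal, so the vanishing must be run on a log resolution.

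Part (2), however, has a genuine gap at exactly the step you flag as ``most delicate.'' Taking $n=\dim X$ members $D_1,\dots,D_n\in|\lfloor D\rfloor|$ through $x$ gives $\operatorname{mult}_x(\sum D_i)\geq n$ and hence non-kltness at $x$, but the non-klt locus of $(X,\sum D_i)$ contains the whole divisor $D_1\cup\dots\cup D_n$ (each component has coefficient $1$), so $x$ is nowhere near isolated in it: being an isolated point of the set-theoretic intersection $\bigcap D_i$ is not the same as being isolated in the non-klt locus, and your write-up conflates the two. The subsequent accounting (``a factor $n$ from cutting down to an isolated centre, $+1$ from tie-breaking, a further $n$ from rescaling'') is asserted rather than proved; cutting a positive-dimensional non-klt centre down to a point in general requires lower bounds on the positivity of $D$ restricted to that centre (Angehrn--Siu/Helmke-type estimates), which are not available here, and no reason is given why a universal factor of $n$ suffices. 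The argument in \cite{HMX13} sidesteps this entirely: one takes $k\gg 0$ general members $H_1,\dots,H_k$ of $|\lfloor D\rfloor|$ through $x$ and sets $\Delta_1=\frac{n}{k}\sum_j H_j\sim_{\QQ}n\lfloor D\rfloor$, which has multiplicity $\geq n$ at $x$ but multiplicity $<1$ at every nearby point, so its non-klt locus is isolated at $x$ by construction; one builds $\Delta_2\sim_{\QQ}n\lfloor D\rfloor$ the same way at $y$, and the extra $\lfloor D\rfloor$ absorbs the separation of $x$ from $y$, giving $2n+1=n+n+1$. Finally, when you scale to the log canonical threshold at $x$ to make the pair lc at $x$ with $\{x\}$ a centre, you may destroy non-kltness at $y$; this is precisely why the definition of potential birationality permits switching $x$ and $y$ (one scales at whichever point has the smaller threshold), and your argument never invokes this.
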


\begin{lem}\label{lem: pot bir plus eff is pot bir}
Let $X$ be a normal projective variety, $D$ a big $\Qq$-Cartier $\Qq$-divisor on $X$, and $E$ an effective $\Qq$-Cartier $\Qq$-divisor on $X$. Suppose that $D$ is potentially birational. Then $D+E$ is potentially birational. 
\end{lem}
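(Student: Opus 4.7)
The strategy is direct: given two general closed points $x,y\in X$, I will build the required divisor for $D+E$ by simply adding an effective multiple of $E$ to the divisor provided by the potential birationality of $D$, exploiting the fact that general points avoid $\Supp E$.

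More precisely, I would first observe that since $E$ is an effective $\Qq$-Cartier $\Qq$-divisor, $\Supp E$ is a proper closed subset of $X$ (or empty, in which case $E=0$ and the statement is trivial). Thus, for a pair of general closed points $x,y\in X$, I may assume in addition that $x,y\notin\Supp E$. Now, since $D$ is potentially birational, after possibly switching $x$ and $y$, I can find a rational number $\epsilon\in(0,1)$ and an effective $\Qq$-divisor $0\le\Delta\sim_{\mathbb Q}(1-\epsilon)D$ such that $(X,\Delta)$ is lc at $x$ with $\{x\}$ an lc center, and $(X,\Delta)$ is not klt at $y$.

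Next, I would set
\[
\Delta':=\Delta+(1-\epsilon)E,
\]
which is an effective $\Qq$-divisor with
\[
\Delta'\sim_{\mathbb Q}(1-\epsilon)D+(1-\epsilon)E=(1-\epsilon)(D+E).
\]
It remains to verify that $(X,\Delta')$ is lc at $x$ with $\{x\}$ an lc center, and is not klt at $y$. Since $x\notin\Supp E$, there is a Zariski open neighborhood of $x$ on which $\Delta'=\Delta$, so the singularities (and lc centers) of $(X,\Delta')$ and $(X,\Delta)$ agree at $x$; in particular $(X,\Delta')$ is lc at $x$ with $\{x\}$ an lc center. Likewise, since $y\notin\Supp E$, the pairs $(X,\Delta')$ and $(X,\Delta)$ coincide near $y$, so $(X,\Delta')$ is not klt at $y$.

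The argument is short and the only subtlety is making sure that the notion of ``general'' in the definition of potentially birational is compatible with avoiding the fixed proper closed subset $\Supp E$; since both conditions cut out a nonempty open (or complement of a countable union of proper closed) subset of $X\times X$, this poses no real difficulty. Thus, no genuine obstacle is expected.
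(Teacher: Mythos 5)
Your proof is correct and follows essentially the same argument as the paper: add $(1-\epsilon)$ times an effective representative of $E$ to the divisor $\Delta$ witnessing potential birationality of $D$, and use that the general points $x,y$ avoid its support. The only discrepancy is that this paper's convention (Definition 2.2) declares a divisor ``effective'' when it is merely $\Rr$-linearly equivalent to a divisor $\geq 0$, so the paper first replaces $E$ by some $F\geq 0$ with $E\sim_{\Qq}F$ and works with $\Supp F$; you implicitly assume $E\geq 0$, which is a one-line adjustment.
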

\begin{proof}
Since $E$ is effective, $E\sim_{\mathbb Q}F$ for some $\Qq$-divisor $F\geq 0$. Since $D$ is potentially birational, for any two general closed points $x$ and $y$ on $X$, possibly switching $x$ and $y$, we may find $0\le \Delta\sim_{\mathbb Q}(1-\epsilon)D$ for some rational number $\epsilon\in (0,1)$, such that $(X,\Delta)$ is lc at $x$ with $\{x\}$ an lc center, and $(X,\Delta)$ is not klt at $y$. Since $x,y$ are general, $x,y\not\in\Supp F$, hence $(X,\Delta':=\Delta+(1-\epsilon)F)$ is lc at $x$ with $\{x\}$ an lc center, and $(X,\Delta')$ is not klt at $y$. We have
$$(1-\epsilon)(D+E)\sim_{\mathbb Q}\Delta+(1-\epsilon)F\geq 0.$$
It follows that $D+E$ is potentially birational.
\end{proof}

The next technical result is crucial to the proofs of our main theorems:
\begin{thm}\label{thm: birational one m to inf m}
    Let $d,m_1,m_2$ be three positive integers and $\epsilon$ a positive real number. Then there exists a positive integer $m_0$ depending only on $d,m_1,m_2$ and $\epsilon$ satisfying the following. Assume that
    \begin{enumerate}
        \item $X$ is a $\Qq$-factorial projective variety of dimension $d$,
         \item $B=\sum_{i=1}^s b_iB_i$ is a pseudo-effective $\Rr$-Cartier $\mathbb R$-divisor on $X$, where each $B_i$ is a pseudo-effective Weil divisor,
         \item $D=n(K_X+B)$ is a big $\Rr$-divisor on $X$ for some integer $n$,
        \item $|m_2m_1nK_X+m_2\sum_{i=1}^s\lfloor m_1nb_i\rfloor B_i|$ defines a birational map, and
        \item for any $\Rr$-divisor $B'=\sum_{i=1}^sb_i'B_i$ on $X$, if $\max_{1\leq i\leq s}\{|b_i-b_i'|\}\leq\epsilon$, then $n(K_X+B')$ is big.
    \end{enumerate}
    Then $|mnK_X+\sum_{i=1}^s\lfloor mnb_i\rfloor B_i|$ defines a birational map for every positive integer $m\geq m_0$.
\end{thm}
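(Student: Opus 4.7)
The plan is to show that the integer Weil divisor $L_m - K_X$ is potentially birational for every $m \geq m_0$; then Lemma \ref{lem: potentially birational}(1), applied to this integer divisor (so that $\lceil \cdot \rceil$ acts trivially), yields that $|L_m| = |K_X + (L_m - K_X)|$ defines a birational map. Throughout, write $L_k := knK_X + \sum_{i=1}^s \lfloor knb_i \rfloor B_i$ for each positive integer $k$, and set $N := (2d+1) m_1 m_2$.

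First, applying Lemma \ref{lem: potentially birational}(2) to hypothesis (4), the integer divisor $P := (2d+1) m_2 L_{m_1}$ is potentially birational. Using the identity $L_k = kD - \sum_i \{kn b_i\} B_i$ (with $D = n(K_X + B)$), one computes, as $\mathbb R$-divisors,
\[
G_m := L_m - K_X - P = (m - N) D - K_X + \sum_i e_i B_i,
\]
where $e_i := (2d+1) m_2 \{m_1 n b_i\} - \{mn b_i\}$ is bounded: $|e_i| \leq (2d+1) m_2 + 1$. By Lemma \ref{lem: pot bir plus eff is pot bir}, it then suffices to prove that $G_m$ is $\mathbb Q$-linearly equivalent to an effective $\mathbb Q$-Cartier $\mathbb Q$-divisor. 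Since $X$ is $\mathbb Q$-factorial, $G_m$ is automatically $\mathbb Q$-Cartier, and it is enough to show that the class of $G_m$ is big.

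Using $-K_X = -D/n + B$ and $B = \sum_i b_i B_i$, rewrite $G_m \sim_{\mathbb R} (m - N - 1/n) D + \sum_i (b_i + e_i) B_i$. Hypothesis (5) supplies the crucial bigness margin: for any tuple $(\alpha_i)$ with $|\alpha_i| \leq n \epsilon$, the class $[D] + \sum_i \alpha_i [B_i]$ is big, and by rescaling $(m - N - 1/n)[D] + \sum_i \gamma_i [B_i]$ is big whenever $|\gamma_i| \leq n\epsilon (m - N - 1/n)$. To achieve $\gamma_i = b_i + e_i$ within this margin uniformly in the data, one splits $b_i + e_i = c_i^+ - c_i^-$ into its non-negative and non-positive parts: the pseudo-effective contribution $\sum_i c_i^+ B_i$ can be added to a big class for free (big plus pseudo-effective is big), while the part $-\sum_i c_i^- B_i$ must be absorbed using the $\epsilon$-margin, combined with the pseudo-effectivity of $B$ itself and an additional rescaling. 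The main obstacle is carrying out this uniform absorption so that $m_0$ depends only on $d, m_1, m_2, \epsilon$, as the coefficients $|b_i|$ are not a priori bounded by those quantities. Once the class of $G_m$ is shown to be big, $G_m$ is $\sim_{\mathbb Q}$ to an effective divisor, whence Lemma \ref{lem: pot bir plus eff is pot bir} gives that $L_m - K_X$ is potentially birational, and Lemma \ref{lem: potentially birational}(1) completes the proof.
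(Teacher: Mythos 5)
Your setup is the same as the paper's: apply Lemma \ref{lem: potentially birational}(2) to hypothesis (4) to get that $P:=(2d+1)m_2L_{m_1}$ is potentially birational, show that the integral Weil divisor $G_m=L_m-K_X-P$ is big (hence effective), and conclude by Lemmas \ref{lem: pot bir plus eff is pot bir} and \ref{lem: potentially birational}(1). Your identity $G_m=(m-N)D-K_X+\sum_ie_iB_i$ with $e_i=(2d+1)m_2\{m_1nb_i\}-\{mnb_i\}$ is correct. But the proof stops exactly at the decisive point: you announce that the negative coefficients must be ``absorbed'' and then declare this absorption to be ``the main obstacle'' without carrying it out. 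As written, that is a genuine gap, and moreover the particular decomposition you propose --- splitting $b_i+e_i$ into its non-negative and non-positive parts $c_i^{\pm}$ --- does not work: since the $b_i$ are arbitrary real numbers, $c_i^-$ can be as large as $|b_i|+1$, which is not bounded in terms of $d,m_1,m_2,\epsilon$, so it can never be fitted into the margin $|n|\epsilon(m-N-1/n)$ for a uniform $m_0$.

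The correct grouping (which is what the paper does) is to keep $B$ intact as a single pseudo-effective divisor rather than dismembering it coefficient by coefficient. Writing $\sum_i(b_i+e_i)B_i=B+\sum_ie_iB_i$ and noting $e_i\in(-1,(2d+1)m_2)$, one puts
$$G_m\sim_{\Rr}\Bigl[(m-N-\tfrac1n)D-\sum_i\{mnb_i\}B_i\Bigr]+\Bigl[B+(2d+1)m_2\sum_i\{m_1nb_i\}B_i\Bigr].$$
The second bracket is pseudo-effective because $B$ is pseudo-effective and each $B_i$ is pseudo-effective with non-negative coefficient $(2d+1)m_2\{m_1nb_i\}\ge0$. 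The first bracket equals $(m-N-\tfrac1n)\cdot n\bigl(K_X+\sum_i(b_i-\tfrac{\{mnb_i\}}{n(m-N)-1})B_i\bigr)$, and since $\{mnb_i\}<1$ the perturbation of each $b_i$ has absolute value $<\tfrac{1}{|n(m-N)-1|}\le\epsilon$ once $m\ge N+\lceil 1/\epsilon\rceil+1$; hypothesis (5) then makes it big. Only the $-\{mnb_i\}$ corrections, which are uniformly in $(-1,0]$, go into the $\epsilon$-margin; $B$ and the non-negative corrections go into the pseudo-effective bucket. With this regrouping your argument closes, with $m_0=N+\lceil 1/\epsilon\rceil+1$, exactly as in the paper. (Two small additional points: the margin should read $|n|\epsilon(m-N-1/n)$ since $n$ may be negative, and bigness of $G_m$ already gives $G_m\sim_{\Rr}A+E$ with $E\ge0$, which is all Lemma \ref{lem: pot bir plus eff is pot bir} needs.)
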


\begin{proof}
Since $|m_2m_1nK_X+m_2\sum_{i=1}^s\lfloor m_1nb_i\rfloor B_i|$ defines a birational map, by Lemma \ref{lem: potentially birational}(2), $(2d+1)(m_2m_1nK_X+m_2\sum_{i=1}^s\lfloor m_1nb_i\rfloor B_i)$ is potentially birational. Let $m_3:=(2d+1)m_1m_2$. We will show that $m_0:=m_3+\lceil\frac{1}{\epsilon}\rceil+1$ satisfies our requirements. For any integer $m\geq m_0$, 
\begin{align*}
&(mnK_X+\sum_{i=1}^s\lfloor mnb_i\rfloor B_i-K_X)-(2d+1)(m_2m_1nK_X+m_2\sum_{i=1}^s\lfloor m_1nb_i\rfloor B_i)\\
=&(n(m-m_3)-1)(K_X+\sum_{i=1}^s(b_i-\frac{\{mnb_i\}}{n(m-m_3)-1})B_i)\\
&+(B+\frac{m_3}{m_1}\sum_{i=1}^s\{m_1nb_i\}B_i).\\
\end{align*}
Since 
$$|\frac{\{mnb_i\}}{n(m-m_3)-1}|<\frac{1}{|n(m-m_3)-1|}\leq\epsilon$$
for any $i$, by our assumptions,
$$n(K_X+\sum_{i=1}^s(b_i-\frac{\{mnb_i\}}{n(m-m_2)-1})B_i)$$ 
is big, hence 
$$(mnK_X+\sum_{i=1}^s\lfloor mnb_i\rfloor B_i-K_X)-(2d+1)(m_2m_1nK_X+m_2\sum_{i=1}^s\lfloor m_1nb_i\rfloor B_i)$$
is big. By Lemma \ref{lem: pot bir plus eff is pot bir}, $$mnK_X+\sum_{i=1}^s\lfloor mnb_i\rfloor B_i-K_X$$ is potentially birational. By Lemma \ref{lem: potentially birational}(1), $|mnK_X+\sum_{i=1}^s\lfloor mnb_i\rfloor B_i|$ defines a birational map. 
\end{proof}

\begin{rem}
In Theorem \ref{thm: birational one m to inf m}, $n$ can be either positive or negative ($n\not=0$ as $D=n(K_X+B)$ is big) and may depend on $X$. On the other hand, $m_0$ does not depend on $n$. In this paper, we usually take $n=1$ or $-1$.
\end{rem}

\section{Volume for bounded log Fano sub-pairs}
%of the form 

In this section, we study several properties of the volume $\vol(-(K_X+B))$, where $X$ is a Fano variety which belongs to a bounded family, and $B$ is a pseudo-effective $\Rr$-divisor. We prove Theorem \ref{thm: bddvolacc maynotbelc} at the end of this section.

\begin{lem}[{\cite[Lemma 2.5]{Jia18}, \cite[Lemma 4.2]{DS16}, \cite[Lemma 2.1]{CDHJS18}}]\label{lem: volineq}
	Let $X$ be a normal projective variety, $D$ an $\Rr$-Cartier
	$\Rr$-divisor, and $S$ a base-point free normal Cartier prime divisor. 
	Then for any positive real number $t$,
	\[
	\vol(D+tS)\leq \vol(D)+t(\dim X) \vol(D|_S+tS|_S).
	\]
\end{lem}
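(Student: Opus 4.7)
The plan is to derive the inequality from the standard short-exact-sequence estimate on $h^0$, and then pass to volume asymptotics. Let $n=\dim X$. By continuity of $\vol(\cdot)$ on the space of $\Rr$-Cartier divisors (with the convention that $\vol$ vanishes off the big cone), I would first reduce to the case where $D$ is $\Qq$-Cartier and $t\in\Qq_{>0}$: both sides depend continuously on $D$ and $t$, so the $\Rr$-case follows by rational approximation. Choose $m\in\Zz_{>0}$ divisible enough that $mD$ is a genuine Cartier divisor and $N:=tm\in\Zz_{>0}$.

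For each $1\leq k\leq N$, twisting the sequence $0\to\cO_X(-S)\to\cO_X\to\cO_S\to 0$ by $\cO_X(mD+kS)$ gives
\[ h^0(X,mD+kS)\leq h^0(X,mD+(k-1)S)+h^0(S,(mD+kS)|_S). \]
Iterating from $k=1$ to $k=N$ yields
\[ h^0(X,m(D+tS))\leq h^0(X,mD)+\sum_{k=1}^N h^0(S,(mD+kS)|_S). \]
Here the base-point-freeness of $|S|$ is used precisely to secure that $S|_S$ is represented by an effective Cartier divisor on $S$: a general member $S'\in|S|$ avoids any prescribed point and so does not contain $S$ as a component, hence $S|_S\sim S'|_S\geq 0$. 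This effectivity gives the monotonicity
\[ h^0(S,(mD+kS)|_S)\leq h^0(S,m(D+tS)|_S) \quad \text{for } 0\leq k\leq N, \]
so that
\[ h^0(X,m(D+tS))\leq h^0(X,mD)+N\cdot h^0(S,m(D+tS)|_S). \]

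Multiplying both sides by $n!/m^n$ and letting $m\to\infty$ along a fixed arithmetic progression, the left-hand side tends to $\vol(D+tS)$, the $h^0(X,mD)$ term to $\vol(D)$, and, since $N/m=t$ together with $h^0(S,m(D+tS)|_S)\cdot(n-1)!/m^{n-1}\to\vol((D+tS)|_S)=\vol(D|_S+tS|_S)$, the last term tends to $t\cdot n\cdot\vol(D|_S+tS|_S)$. This is exactly the stated inequality; the $\Rr$-Cartier case then follows by approximation as noted above.

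The only conceptual step beyond routine cohomology bookkeeping is the monotonicity, which rests on the effectivity of $S|_S$, and this is exactly what the base-point-free hypothesis on $|S|$ provides; I expect this to be the one place where the stated hypotheses on $S$ are genuinely needed. The reduction to rational data at the outset and the passage to the asymptotic limit at the end are standard, using only continuity of $\vol$ on the big cone and the polynomial growth of $h^0$ in $m$.
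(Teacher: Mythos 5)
Your argument is correct and is essentially the standard proof of this restriction--volume inequality: the paper itself gives no proof but cites \cite[Lemma 2.5]{Jia18}, \cite[Lemma 4.2]{DS16}, \cite[Lemma 2.1]{CDHJS18}, and those sources argue exactly as you do, via the ideal-sheaf sequence of $S$ twisted by $\cO_X(mD+kS)$, iteration, and the effectivity of $S|_S$ coming from base-point-freeness to bound each boundary term by the top one before passing to the asymptotic limit. No gaps.
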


\begin{thm}\label{thm: approximation big bounded}
    Let $v$ be a positive real number and $\mathcal{P}$ a log bounded set of pairs. Then there exists a positive real number $\epsilon$ depending only on $v$ and $\mathcal{P}$ satisfying the following. Assume that
    \begin{enumerate}
        \item $(X,B)\in\mathcal{P}$ is a $\Qq$-factorial projective pair,
        \item $M$ is a pseudo-effective $\Rr$-divisor on $X$,
        \item $\vol(-(K_X+M))\geq v$, and
        \item $B'$ is an $\Rr$-divisor on $X$ such that $||B'||\leq\epsilon$ and $\Supp B'\subset\Supp B$.
    \end{enumerate}
    Then $\vol(-(K_X+B'+M))\geq\frac{v}{2}$. In particular, $-(K_X+B'+M)$ is big.
\end{thm}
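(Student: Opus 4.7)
The plan is to bound $\vol(-(K_X+B'+M))$ below by $v/2$ via a chain of volume-monotonicity reductions ending in one application of Lemma \ref{lem: volineq}, with each uniform constant extracted from the log boundedness of $\mathcal{P}$.

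First, decompose $B' = B'_+ - B'_-$ with $B'_+, B'_- \geq 0$, $\Supp B'_+\cup\Supp B'_-\subset\Supp B$, and all nonzero coefficients in $[0,\epsilon]$. Since $B'_-\geq 0$,
\[
\vol(-(K_X+B'+M))\ \geq\ \vol(-(K_X+M)-B'_+).
\]
Let $N := \sum_i B_i$ be the reduced support of $B$; then $B'_+\leq\epsilon N$, so $\vol(-(K_X+M)-B'_+)\geq \vol(-(K_X+M)-\epsilon N)$. Log boundedness of $\mathcal{P}$, applied to a universal family $V\to T$ with boundary $C$ and a relative very ample $\mathcal{O}_V(H)$, combined with relative Serre vanishing on the quasi-compact base $T$, yields a positive integer $\lambda_0$ depending only on $\mathcal{P}$ such that $\lambda_0 H - N$ is $\Qq$-linearly equivalent to an effective $\Qq$-divisor on every $X\in\mathcal{P}$. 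Hence
\[
\vol(-(K_X+M)-\epsilon N)\ \geq\ \vol(-(K_X+M)-\epsilon\lambda_0 H).
\]

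For the main estimate, take a general $H'\in|H|$, which by Bertini is base-point-free, prime, and normal. Apply Lemma \ref{lem: volineq} with $D:=-(K_X+M)-\epsilon\lambda_0 H$, $S:=H'$, and $t:=\epsilon\lambda_0$. Since $D+\epsilon\lambda_0 H'\sim_{\Qq}-(K_X+M)$, the lemma gives
\[
v\ \leq\ \vol(D+\epsilon\lambda_0 H')\ \leq\ \vol(D) + \epsilon\lambda_0\, d\cdot \vol_{H'}\!\bigl(-(K_X+M)|_{H'}\bigr).
\]
Since $M$ is pseudo-effective, $M|_{H'}$ is pseudo-effective for general $H'$, so $\vol_{H'}(-(K_X+M)|_{H'})\leq \vol_{H'}(-K_X|_{H'})$. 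The latter admits a uniform upper bound $V'=V'(\mathcal{P})$ because $(H',-K_X|_{H'})$ inherits a bounded structure from the universal family together with Theorem \ref{thm: BAB} and adjunction. Choosing $\epsilon\leq v/(2\lambda_0 d V')$, which depends only on $v$ and $\mathcal{P}$, forces $\vol(D)\geq v/2$, and chaining through the preceding reductions produces $\vol(-(K_X+B'+M))\geq v/2$ as desired.

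The hard part will be extracting the uniform constants $\lambda_0$, $H$, and $V'$ from log boundedness of $\mathcal{P}$: producing $\lambda_0,H$ such that $\lambda_0 H - N$ is effective on every fiber calls for uniform Castelnuovo--Mumford regularity (or relative Serre vanishing) along the universal family of $\mathcal{P}$, and bounding $\vol_{H'}(-K_X|_{H'})$ uniformly uses that general hyperplane sections of $(X,H)$ themselves form a bounded family, so that restricted numerical classes like $-K_X|_{H'}$ have uniformly bounded volume by flat-family arguments. Everything else is formal: volume monotonicity under addition of effective (and more generally pseudo-effective) classes, together with the single application of Lemma \ref{lem: volineq}.
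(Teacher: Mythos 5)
Your argument is essentially the paper's proof: a single application of Lemma \ref{lem: volineq} with $t$ proportional to $\epsilon$, a uniform bound (from log boundedness) on the restricted volume appearing in the error term, and then $\epsilon$ chosen so that the error term is at most $v/2$; the paper merely streamlines your constants $\lambda_0,H,V'$ by fixing one very ample $S$ with $S-\Supp B$ and $S+K_X$ big, which gives $\vol(-(K_X+M)|_{S'})\leq\vol(S|_{S'})=S^d\leq r$ in one stroke. One small correction: your appeal to Theorem \ref{thm: BAB} to bound $V'$ is misplaced, since the statement assumes only that $\mathcal{P}$ is log bounded (there is no $\epsilon$-lc Fano-type hypothesis here); the needed bound follows from boundedness of $\mathcal{P}$ alone, e.g.\ exactly as in the paper by writing $-(K_X+M)|_{S'}$ as $S|_{S'}$ minus the restriction of the big divisor $S+K_X+M$.
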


\begin{proof}
Since $\mathcal{P}$ is log bounded, we may pick a very ample divisor $S$ on $X$, such that $S^d\leq r$, and $S+K_X$ and $S-\Supp B$ are big. Let $S'\in |S|$ be a general element, such that $S'$ is a normal Cartier prime divisor, and $(S+K_X+M)|_{S'}$ is effective.%$S'\not=S$ and $S'\not\subset\mathbf{B}(S+K_X+M)$. be a general element

By Lemma \ref{lem: volineq}, for any positive real number $t$,
\begin{align*}
   v&\leq\vol(-(K_X+M))\\
   &\leq\vol(-(K_X+M)-t S')+t(\dim X)\vol(-(K_X+M)|_{S'})\\
    &\leq\vol(-(K_X+M)-t S')+t(\dim X)\vol(S|_{S'})\\
    &=\vol(-(K_X+M)-t S')+t(\dim X)r.
\end{align*}
Since $\mathcal{P}$ is log bounded, we may assume that $\dim X\leq d$ for some positive integer $d$ depending only on $\mathcal{P}$. Let $\epsilon:=\frac{v}{2dr}$. For any $\Rr$-divisor $B'$ on $X$ such that $||B'||\leq\epsilon$ and $\Supp B'\subset\Supp B$,  we have
\begin{align*}
   \vol(-(K_X+B'+M))&\geq\vol(-(K_X+\epsilon\Supp B+M))\\
   &\geq\vol(-(K_X+M)-\epsilon S')\\
   &\geq v-dr\epsilon=\frac{v}{2}.
\end{align*}
Thus $\epsilon$ satisfies our requirements.
\end{proof}

%is big for every $i\in\{0,1,\dots,s\}$. In particular, $A_Y-(D_i-xB_{i+1})$ is big for every $x\in [0,t_{i+1}]$. 

The following Proposition is a special case of Theorem \ref{thm: bddvolacc maynotbelc}, i.e., when $\Ii$ is a finite set. The Step 2 in the proof of Proposition \ref{prop: bddvolfin maybenotlc} is similar to the proof of \cite[Lemma 3.26]{HLS19}.

\begin{lem}\label{prop: bddvolfin maybenotlc}
	Let $d$ be a positive integer, $\epsilon$ a positive real number, and $\Ii_0$ a finite set of non-negative numbers. Then there exists a finite set $\Ii_0'$ depending only on $d,\epsilon$ and $\Ii_0$ satisfying with the following. Assume that
	\begin{enumerate}
	    \item $(X,\Delta)$ is projective $\epsilon$-lc of dimension $d$ for some boundary $\Delta$ such that $K_X+\Delta\sim_{\mathbb R}0$ and $\Delta$ is big,
	    \item $B$ is a pseudo-effective $\Rr$-divisor on $X$, and
	    \item $B=\sum_{i=1}^s b_iB_i$, where each $b_i\in\Ii_0$, and each $B_i$ is a pseudo-effective Weil divisor.
	\end{enumerate}
then $\vol(-(K_X+B))\in \Ii_0'$.
\end{lem}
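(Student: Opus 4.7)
The plan is to use a minimal model program to reach a nef model of $-(K_X+B)$ and then to quantize the resulting top self-intersection using boundedness of the ambient variety. First, I will replace $X$ by a small $\QQ$-factorialization, under which all hypotheses and $\vol(-(K_X+B))$ are preserved. If $-(K_X+B)$ is not big, then the volume is $0$, which I include in $\Ii_0'$; so I henceforth assume $-(K_X+B)$ is big. Since $\Ii_0$ is finite, I will group by coefficient and write
\[
B=\sum_{c\in\Ii_0\setminus\{0\}} c\,B^{(c)},\qquad B^{(c)}:=\sum_{i\colon b_i=c} B_i,
\]
where each $B^{(c)}$ is a pseudo-effective Weil divisor and the number of nonzero summands is at most $|\Ii_0|$. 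Set $\delta_0:=\min(\Ii_0\setminus\{0\})>0$ (the edge case $B=0$ reduces to boundedness of $\vol(-K_X)$ on the BAB family).

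Next, because $X$ is of Fano type, I will run a $(-(K_X+B))$-MMP. By the remark preceding Corollary \ref{cor: bab log bdd}, this MMP is a sequence of $(K_X+\Delta)$-flops, and terminates with a birational map $X\dashrightarrow Y$ to a $\QQ$-factorial $\epsilon$-lc variety $Y$ with $K_Y+\Delta_Y\sim_{\mathbb R}0$, $\Delta_Y$ big, and $-(K_Y+B_Y)$ nef and big. Theorem \ref{thm: BAB} then places $Y$ in a bounded family depending only on $d$ and $\epsilon$, so there exists $r=r(d,\epsilon)$ and a very ample divisor $H$ on $Y$ with $H^d\leq r$ and $-K_Y\cdot H^{d-1}\leq r$. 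Volumes are invariant under this MMP, so $\vol(-(K_X+B))=(-(K_Y+B_Y))^d$.

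To control each component, note that
\[
-(K_Y+cB_Y^{(c)})=-(K_Y+B_Y)+\sum_{c'\neq c} c'\,B_Y^{(c')}
\]
is a sum of pseudo-effective $\QQ$-Cartier divisors (using $\QQ$-factoriality of $Y$), hence pseudo-effective. Pairing with $H^{d-1}$ yields $B_Y^{(c)}\cdot H^{d-1}\leq r/\delta_0$, and \cite[Lemma 2.25]{Bir19} then furnishes $n_1=n_1(d,\epsilon,\delta_0)$ such that $n_1 B_Y^{(c)}$ is Cartier.

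Finally, I will argue that the intersection number $(-(K_Y+B_Y))^d$ takes only finitely many values as $(X,B)$ varies. Since $Y$ lies in a bounded family and each $n_1 B_Y^{(c)}$ is a Cartier divisor of $H^{d-1}$-degree bounded by $n_1 r/\delta_0$, a standard boundedness argument---pass to an effective representative via Lemma \ref{lem: psd ft change to eff} (applied to each $B_Y^{(c)}$, with the parameter $\delta=1/\delta_0$), apply the relative Hilbert scheme to extract a bounded parameter space of effective representatives, and invoke constancy of intersection numbers on irreducible components of flat families---shows that only finitely many numerical classes $[n_1 B_Y^{(c)}]$ occur. Combined with the finiteness of $\Ii_0$, the quantity $(-K_Y-\sum_c c B_Y^{(c)})^d$ takes only finitely many values, and I will set $\Ii_0'$ to be $\{0\}$ together with these values. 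The main obstacle is this last step: rigorously upgrading bounded degree to finitely many numerical classes on a bounded family of non-isomorphic varieties requires careful use of the relative Hilbert scheme together with the effectivization of pseudo-effective Weil divisors provided by Lemma \ref{lem: psd ft change to eff}.
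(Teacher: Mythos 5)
Your proposal is correct and follows essentially the same route as the paper's proof: effectivize the pseudo-effective Weil divisors via Lemma \ref{lem: psd ft change to eff}, pass to a nef model so that the volume becomes a top self-intersection, invoke Theorem \ref{thm: BAB} and log boundedness to bound the Cartier index and degrees, and deduce finiteness of the relevant intersection numbers from their constancy in flat families. The one point your last step must make precise is that ``finitely many numerical classes'' has to be established for $K_Y$ and all the $B_Y^{(c)}$ \emph{simultaneously} inside a single flat family (not componentwise, since the mixed intersection numbers are what enter the expansion of $(-K_Y-\sum_c cB_Y^{(c)})^d$), which is exactly how the paper's Step 2 arranges the family $V\to T$ with $C=\sum C_j$ before applying invariance of Euler characteristics; your polynomial-in-$c$ bookkeeping then cleanly replaces the paper's rational approximation of the real coefficients.
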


\begin{proof}
\noindent\textbf{Step 1}. In this step, we reduce the lemma to the case when each $B_i\geq 0$ and $X$ is $\Qq$-factorial. Possibly replacing $X$ with a small $\Qq$-factorization, we may assume that $X$ is $\Qq$-factorial. We may assume that $-(K_X+B)$ is big, otherwise $\vol(-(K_X+B))=0$ and there is nothing to prove. We may also assume that each $B_i\not\equiv 0$ and each $b_i>0$, otherwise we may replace $B$ with $B-b_iB_i$. Since $-(K_X+b_iB_i)- (-(K_X+B))$ is pseudo-effective, $-(\frac{1}{b_i}K_X+B_i)$ is big for any $i$. By Lemma \ref{lem: psd ft change to eff}, there exist a positive integer $n$ depending only on $d,\epsilon$ and $\Ii_0$, and $\Qq$-divisors $B_i'\geq 0$, such that $nB_i'\sim nB_i$ for each $i$. Possibly replacing $\Ii_0$ with $\frac{1}{n}\Ii_0$ and $B_i$ with $B_i'$, we may assume that each $B_i\geq 0$. 

By Theorem \ref{thm: BAB}, $X$ is bounded. Thus there exist a positive integer $r$ depending only on $d$ and $\epsilon$, and a very ample divisor $H$ on $X$, such that $(-K_X)\cdot H^{d-1}\leq r$. For any irreducible component $D$ of $B$, since $-(K_X+B)$ is big, $-(K_X+(\mult_DB)D)$ is big. Thus $-(K_X+(\mult_DB)D)\cdot H^{d-1}>0$, hence $\mult_DB<r$, and $B\in [0,r)$. Let 
$$\Ii_0':=\{\sum_{j=1}^sn_i\gamma_i\mid s,n_i\in\mathbb N, \gamma_i\in\Ii_0,\sum_{j=1}^sn_i\gamma_i<r\},$$
then $\Ii_0'$ is a finite set, and all the coefficients of $B$ belong to $\Ii_0'$. Possibly replacing $B_1,\dots,B_s$ with the irreducible components of $B$, and $\Ii_0$ with $\Ii_0'$, we may assume that $B_i$ are distinct prime divisors.
\medskip

\noindent\textbf{Step 2}. In this step, we prove the lemma by contradiction. Suppose that there exists a sequence of $\Qq$-factorial pairs $(X_i,B^i)$, such that $(X_i,\Delta_i)$ is $\epsilon$-lc and $K_{X_i}+\Delta_i\sim_{\mathbb R}0$ for some $\Delta_i$, $B^i\in\Ii_0$, $-(K_{X_i}+B^i)$ is big, and $\vol(-(K_{X_i}+B^i))$ is either strictly increasing or strictly decreasing. 
	
Since $X_i$ is of Fano type, possibly replacing $-(K_{X_i}+B^i)$ with a minimal model, we may assume that $-(K_{X_i}+B^i)$ is big and nef. By Corollary \ref{cor: bab log bdd}, $(X_i,B^i)$ belongs to a log bounded family.

	Possibly passing to a subsequence of $(X_i,\Supp B^i)$, we may assume that there exist a projective morphism $V\to T$ of varieties, a non-negative integer $u$, a reduced divisor $C=\sum_{j=1}^u C_j$ on $V$, and a dense set of closed points $t_i\in T$ such that $X_i$ is the fiber of $V\to T$ over $t_i$, and each component of $\Supp B^i$ is a fiber of $C_j\to T$ over $t_i$ for some $j$. Since $X_i$ is normal, possibly replacing $V$ with its normalization and replacing $C$ with its inverse image with reduced structure, we may assume that $V$ is normal. Possibly shrinking $T$, using Noetherian induction, and passing to a subsequence of $(X_i,\Supp B^i)$, we may assume that $V\to T$ is flat, and $C_j\to T$ is flat for any $j$.
	%$T$ is smooth and according to \cite[Proposition 2.4]{HX15} $C_j$ is $\Qq$-Cartier and $K_V$ is klt  and $t\in T$
	
	We may assume that $B^i=\sum_{j=1}^ub^i_j C_j|_{t_i}$, where $b^i_j\in\Ii_0$. Possibly passing to a subsequence, we may assume that $b^i_j$ is a constant for each $j$.
	
	For each $j$, let $a^i_j$ be a sequence of increasing rational numbers, such that $a^i_j\le b^1_j$ and $\lim_{i\to+\infty}a^i_j=b^1_j$. Let $B^1_i:=\sum_{j=1}^u a^i_j C_j|_{t_1}$ and $B^2_i:=\sum_{j=1}^u a^i_j C_j|_{t_2}$. By the asymptotic Riemann-Roch theorem and the invariance of Euler characteristic in a flat family, we have
	\begin{align*}
	&\vol(-(K_{X_1}+B^1))=(-(K_{X_1}+B^1))^d=\lim_{i\to+\infty}(-(K_{X_1}+B^1_i))^d\\ =&\lim_{i\to+\infty}(-(K_{X_2}+B^2_i))^d
	=(-(K_{X_2}+B^2))^d=\vol(-(K_{X_2}+B^2)),               
	\end{align*}
	a contradiction.
\end{proof}
%\han{The ACC of volume is not used in the rest of the paper. Remove it?}

Now we are ready to prove Theorem \ref{thm: bddvolacc maynotbelc}. The Step 2 in the proof of Theorem \ref{thm: bddvolacc maynotbelc} is similar to the proof of \cite[Proposition 3.28]{HLS19}.

\begin{proof}[Proof of Theorem \ref{thm: bddvolacc maynotbelc}] By Proposition \ref{prop: bddvolfin maybenotlc}, we only need to prove the case when $\Ii$ is a DCC set and show that the set of $\vol(-(K_X+B))$ satisfies the ACC.
\medskip

\noindent\textbf{Step 1}. In this step, we reduce the lemma to the case when each $B_i\geq 0$ and $X$ is $\Qq$-factorial. Possibly replacing $X$ with a small $\Qq$-factorization, we may assume that $X$ is $\Qq$-factorial. We may assume that $-(K_X+B)$ is big, otherwise $\vol(-(K_X+B))=0$ and there is nothing to prove. We may also assume that each $B_i\not\equiv 0$ and each $b_i>0$, otherwise we may replace $B$ with $B-b_iB_i$. Since $-(K_X+b_iB_i)- (-(K_X+B))$ is pseudo-effective, $-(\frac{1}{b_i}K_X+B_i)$ is big for any $i$. By Lemma \ref{lem: psd ft change to eff}, there exist a positive integer $n$ depending only on $d,\epsilon$ and $\Ii$, and $\Qq$-divisors $B_i'\geq 0$, such that $nB_i'\sim nB_i$ for each $i$. Possibly replacing $\Ii$ with $\frac{1}{n}\Ii$ and $B_i$ with $B_i'$, we may assume that each $B_i\geq 0$.

By Theorem \ref{thm: BAB}, $X$ is bounded. Thus there exist a positive integer $r$ depending only on $d$ and $\epsilon$, and a very ample divisor $H$ on $X$, such that $(-K_X)\cdot H^{d-1}\leq r$. For any irreducible component $D$ of $B$, since $-(K_X+B)$ is big, $-(K_X+(\mult_DB)D)$ is big. Thus $-(K_X+(\mult_DB)D)\cdot H^{d-1}>0$, hence $\mult_DB<r$, and $B\in [0,r)$. Let 
$$\Ii':=\{\sum_{j=1}^sn_i\gamma_i\mid s,n_i\in\mathbb N, \gamma_i\in\Ii,\sum_{j=1}^sn_i\gamma_i<r\},$$
then $\Ii'$ is a DCC set, and all the coefficients of $B$ belong to $\Ii'$. Possibly replacing $B_1,\dots,B_s$ with the irreducible components of $B$, and $\Ii$ with $\Ii'$, we may assume that $B_i$ are distinct prime divisors.

\medskip

\noindent\textbf{Step 2}. In this step, we prove the lemma by contradiction.

		Suppose that there exists a sequence of $\Qq$-factorial pairs $(X_i,B^i)$, such that $(X_i,\Delta_i)$ is $\epsilon$-lc and $K_{X_i}+\Delta_i\sim_{\mathbb R}-$ for some $\Delta_i$, $B^i\in\Ii$, $-(K_{X_i}+B^i)$ is big, and $\vol(-(K_{X_i}+B^i))$ is strictly increasing. 
	
 Since $X_i$ is of Fano type, possibly replacing $-(K_{X_i}+B^i)$ with a minimal model, we may assume that $-(K_{X_i}+B^i)$ is big and nef. By Corollary \ref{cor: bab log bdd}, $(X_i,B^i)$ belongs to a log bounded family. 
	
	In particular, the number of irreducible components of $B^i$ is bounded from above. Possibly passing to a subsequence, we may assume that
	$$B^i=\sum_{j=1}^u b^i_jB^i_j,$$
    where $u$ is a non-negative integer, $B^i_j$ are the irreducible components of $B^i$, $b^i_j\in\Ii$, and $\{b^i_j\}_{i=1}^{\infty}$ is an increasing sequence for every $j\in\{1,2,\dots,u\}$. Let
	$$\overline{b_j}:=\lim_{i\to +\infty}b^i_j,\text{ and }\overline{B^i}:=\sum_{j=1}^u \overline{b_j}B^i_j.$$

	\begin{claim}\label{claim: effective acc volume maynotbelc} There exists a sequence of positive real numbers $\epsilon_i$, such that $\lim_{i\to+\infty}\epsilon_i=0$, and
		$$\epsilon_i(-(K_{X_i}+B^i))-(\overline{B^i}-B^i)$$
		is effective.
	\end{claim}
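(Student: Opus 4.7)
The plan is to exploit log boundedness of the family $(X_i, B^i)$ (which is available via Corollary~\ref{cor: bab log bdd} after passing to minimal models of $-(K_{X_i}+B^i)$ as in Step 2) in order to compare the perturbation $\overline{B^i} - B^i$ with a uniformly small multiple of $-(K_{X_i}+B^i)$ through a single auxiliary very ample divisor $H_i$.

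First, I would extract from log boundedness a positive integer $r$ and, for each $i$, a very ample divisor $H_i$ on $X_i$ with bounded intersection numbers $H_i^d \leq r$ and $(-K_{X_i}) \cdot H_i^{d-1} \leq r$, and such that for every $j$ there exists an effective $\mathbb R$-divisor $G_j^i \geq 0$ satisfying $B_j^i + G_j^i \sim_{\mathbb R} MH_i$, with $M$ a positive integer independent of $i$ and $j$. This uses that each individual component $B_j^i$ appears as a fiber of some $C_j^a$ in a bounded family $V^a \to T^a$, so taking $H^a$ sufficiently ample on $V^a$ and restricting to fibers gives a uniform $M$. Since we are in the reduced situation $B_j^i \geq 0$ and $-(K_{X_i}+B^i)$ is nef, the nef intersection $(-(K_{X_i}+B^i))^{d-1} \cdot H_i$ is also uniformly bounded by some $r$ (enlarging if necessary).

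Next, I would apply the volume inequality (Lemma~\ref{lem: volineq}) to $D = -(K_{X_i}+B^i) - tH_i$ and $S = H_i$ (after replacing $H_i$ by a general Bertini member), which gives, for any $t > 0$,
\[
\vol\bigl(-(K_{X_i}+B^i) - tH_i\bigr) \geq \vol\bigl(-(K_{X_i}+B^i)\bigr) - td\, r.
\]
Setting $v := \vol(-(K_{X_1}+B^1)) > 0$, the hypothesis that $\vol(-(K_{X_i}+B^i))$ is strictly increasing gives $\vol(-(K_{X_i}+B^i)) \geq v$ for all $i$. Choosing the uniform value $t_0 := v/(2dr)$ then makes $-(K_{X_i}+B^i) - t_0 H_i$ big, hence $\mathbb R$-linearly equivalent to an effective divisor $E_i \geq 0$.

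Finally, set $\delta_i := u \cdot \max_{1 \leq j \leq u}(\overline{b_j} - b_j^i)$, so that $\delta_i \to 0$, and put $\epsilon_i := M \delta_i / t_0 \to 0$. Using $B_j^i \sim_{\mathbb R} MH_i - G_j^i$, a short computation shows that $\delta_i M H_i - (\overline{B^i} - B^i)$ is $\mathbb R$-linearly equivalent to an effective divisor, so
\[
\epsilon_i\bigl(-(K_{X_i}+B^i)\bigr) - (\overline{B^i} - B^i) \;\sim_{\mathbb R}\; \delta_i M H_i + \epsilon_i E_i - (\overline{B^i} - B^i)
\]
is effective. The main obstacle will be the first step, namely producing the uniform integer $M$ dominating every component $B_j^i$ up to $\mathbb R$-linearly equivalent effective divisors; this is the technical heart, requiring careful use of the parametrizing family $(V^a \to T^a, C^a)$ from the definition of log boundedness and Noetherian induction over the components of $T^a$.
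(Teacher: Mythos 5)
Your proposal is correct and follows essentially the same route as the paper's proof: both compare $\overline{B^i}-B^i$ with a small multiple of an auxiliary very ample divisor coming from the log bounded family, use Lemma \ref{lem: volineq} together with the uniform lower bound $\vol(-(K_{X_i}+B^i))\geq\vol(-(K_{X_1}+B^1))>0$ and the bounded intersection number to conclude that $-(K_{X_i}+B^i)-t_0H_i$ is effective for a uniform $t_0>0$, and then combine the two pieces. The only (cosmetic) difference is that your first step is more elaborate than necessary: the paper simply chooses $A_i$ very ample with $\delta_i A_i-(\overline{B^i}-B^i)$ effective and $(-(K_{X_i}+B^i))^{d-1}\cdot A_i$ bounded, which follows at once from log boundedness by taking $A_i-\Supp B^i$ effective and $\delta_i:=\max_j(\overline{b_j}-b^i_j)$, with no need for the uniform $M$ and the divisors $G^i_j$.
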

	
	Suppose that the claim is true. Possibly passing to a subsequence, we may assume that $\epsilon_i<1$ for every $i$. Then 
	$$-(K_{X_i}+\overline{B^i})-(1-\epsilon_i)(-(K_{X_i}+B^i))=\epsilon_i(-(K_{X_i}+B^i))-(\overline{B^i}-B^i)$$
	is effective
	and $-(K_{X_i}+\overline{B^i})$ is big. Thus
	\begin{align*}
	\vol(-(K_{X_i}+\overline{B^i}))
	\ge&(1-\epsilon_i)^d\vol(-(K_{X_i}+B^i))\\
	\ge&(1-\epsilon_i)^d\vol(-(K_{X_i}+\overline{B^i})).
	\end{align*}	 
%	We may run a $-(K_{X_i}+\overline{B^i})$-MMP, and reach a minimal model $X_i\dashrightarrow X_i'$. Let ${B^i}'$ be the strict transform of $B^i$ on $X_i'$ and $\overline{B^i}'$ the strict transform of $\overline{B^i}$ on $X_i'$ respectively, then by Corollary \ref{cor: bab log bdd}, $(X_i',{B^i}')$ belongs to a log bounded family. 
By Proposition \ref{prop: bddvolfin maybenotlc}, $\vol(-(K_{X_i}+\overline{B^i}))$ belongs to a finite set. Possibly passing to a subsequence, we may assume that $\vol(-(K_{X_i}+\overline{B^i}))=C$ is a constant. Hence 
	$$
	\frac{C}{(1-\epsilon_i)^d}
	\ge\vol(-(K_{X_i}+B^i))\\
	\ge C.
	$$	
	Since $\lim_{i\rightarrow\infty}\epsilon_i=0$ and $\vol(-(K_{X_i}+B^i))$ is increasing, we deduce that $\vol(-(K_{X_i}+B^i))$ belongs to a finite set, a contradiction.
\end{proof}

\begin{proof}[Proof of Claim \ref{claim: effective acc volume maynotbelc}] 
	Let $A_i$ be a very ample divisor on $X_i$, such that 
	$$\delta_i A_i-(\overline{B^i}-B^i)$$
	is effective for some positive real number $\delta_i$, $\lim_{i\to +\infty}\delta_i=0$, and $(-(K_{X_i}+B^i))^{d-1}\cdot A_i\leq r$, where $r$ is a positive real number depending only on $d,\epsilon$ and $\Ii$. There exists a positive real number $b$, such that $$b<\frac{(-(K_{X_i}+B^i))^{d}}{d(-(K_{X_i}+B^i))^{d-1}\cdot A_i}=\frac{\vol(-(K_{X_i}+B^i))}{d(-(K_{X_i}+B^i))^{d-1}\cdot A_i}.$$
	
	By Lemma \ref{lem: volineq}, we have
	\begin{align*}
	&\vol(-(K_{X_i}+B^i)-bA_i)\\
	\ge&     \vol(-(K_{X_i}+B^i))-bd\vol(-(K_{X_i}+B^i)|_{A_i})\\
	>&\vol(-(K_{X_i}+B^i))-\frac{\vol(-(K_{X_i}+B^i))}{(-(K_{X_i}+B^i))^{d-1}\cdot A_i}\cdot ((-(K_{X_i}+B^i))^{d-1}\cdot A_i)\\
	=&0,
	\end{align*}
	which implies that 
	$$-(K_{X_i}+B^i)-bA_i$$
	is effective.
	
	Let $\epsilon_i:=\frac{\delta_i}{b}$, then
	$$\epsilon_i(-(K_{X_i}+B^i))-(\overline{B^i}-B^i)=(\epsilon_i(-(K_{X_i}+B^i))-b\epsilon_iA_i)+(\delta_iA_i-(\overline{B^i}-B^i))$$
	is effective, and the claim is proved.
\end{proof}

\section{Proofs of Theorem \ref{thm: eff bir fin coeff gpair} and Theorem \ref{thm: eff bir bound volume more section gpair}}

\begin{lem}\label{lem: finite rational coefficient eff bir}
Let $d,n$ be two positive integers and $\epsilon$ a positive real number. Then there exists a positive integer $m$ divisible by $n$ depending only on $d,n$ and $\epsilon$ satisfying the following. Assume that
\begin{enumerate}
 \item $(X,\Delta)$ is projective $\epsilon$-lc of dimension $d$ for some boundary $\Delta$ such that $K_X+\Delta\sim_{\mathbb R}0$ and $\Delta$ is big,
 \item $B$ is a  pseudo-effective $\Qq$-divisor on $X$ such that $K_X+B$ is $\Qq$-Cartier,
 \item $nB$ is a Weil divisor, and
 \item $-(K_X+B)$ is big,
\end{enumerate}
then $|-m(K_X+B)|$ defines a birational map.
\end{lem}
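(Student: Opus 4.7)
The plan is to combine Birkar's effective anti-pluricanonical birationality for $\epsilon$-lc Fano type varieties with the theory of complements (Corollary~\ref{cor: psd complement for bounded family}) and a potential-birationality decomposition.

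After replacing $X$ with a small $\mathbb{Q}$-factorialization, I may assume $X$ is $\mathbb{Q}$-factorial. Writing $B=\tfrac{1}{n}(nB)$ with $nB$ a single pseudo-effective Weil divisor, Corollary~\ref{cor: psd complement for bounded family} applied with $\Ii_0=\{1/n\}$ yields a positive integer $n_1$ depending only on $d,\epsilon,n$ and a $\mathbb{Q}$-divisor $B^+$ with $n_1B^+$ Weil, $n_1(K_X+B^+)\sim 0$, and $n_1(B^+-B)\in|-n_1(K_X+B)|$; in particular $B^+\geq B$ and $-K_X\sim_{\mathbb{Q}} B^+$. By Theorem~\ref{thm: BAB}, $X$ belongs to a bounded family, and by Theorem~\ref{thm: birkar -mK_X} (applied to a weak Fano model obtained via a $(-K_X)$-MMP), there exists $m_1=m_1(d,\epsilon)$ such that $|-m_1 K_X|$ defines a birational map; after multiplying $m_1$ so that $\lcm(n,n_1)\mid m_1$, Lemma~\ref{lem: potentially birational}(2) gives that $C\cdot(-K_X)$ is potentially birational, where $C:=(2d+1)m_1$.

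Take $m$ to be a sufficiently large multiple of $\lcm(n,n_1)$, to be specified below. Then $-m(K_X+B)$ is an integral Weil divisor and $D':=-m(K_X+B)-K_X=-(m+1)K_X-mB$ is integral. By Lemma~\ref{lem: potentially birational}(1), $|-m(K_X+B)|=|K_X+D'|$ defines a birational map provided that $D'$ is potentially birational. Writing $D'=-CK_X+D''$ with $D'':=-(m+1-C)K_X-mB$, Lemma~\ref{lem: pot bir plus eff is pot bir} reduces this to showing $D''$ is $\mathbb{Q}$-linearly equivalent to an effective $\mathbb{Q}$-divisor. Using $-K_X\sim_{\mathbb{Q}}B^+$,
\[
D''\sim_{\mathbb{Q}}(m+1-C)B^+-mB=(m+1-C)(B^+-B)+(1-C)B.
\]

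The main obstacle is effectivity of the right-hand side on components $D$ of the positive part $B_+$ of $B$ where $B^+$ is \emph{tight}, i.e.\ $b^+_D=b_D$. I would address this by refining the choice of $B^+$: in the proof of Corollary~\ref{cor: psd complement for bounded family}, $B^+-B$ arises as $\tfrac{1}{n_0}$ times the pull-back of a section of a base-point-free linear system on the minimal model $Y$ of $-n_0(K_X+B)$; after possibly enlarging $n_1$ within a bound depending only on $d,\epsilon,n$, that section can be chosen to vanish to positive order along the image of every component of $B_+$, forcing $b^+_D-b_D\geq 1/n_1$ for every $D\in\Supp B_+$. Combined with the uniform upper bound $b_D\leq r(d,\epsilon)$ coming from boundedness of $X$, the inequality $(m+1-C)(b^+_D-b_D)\geq(C-1)b_D$ then holds for all $m$ above an explicit threshold depending only on $d,\epsilon,n$, giving the required effectivity and completing the proof.
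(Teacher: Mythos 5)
Your overall strategy is genuinely different from the paper's: the paper proves this lemma directly by passing to a minimal model of $-(K_X+B)$, invoking boundedness of $X$ (Theorem~\ref{thm: BAB}) to bound $-n(K_X+B)\cdot H^{d-1}$, deducing from \cite[Lemma 2.25]{Bir19} that $m_1(K_X+B)$ is Cartier for a bounded $m_1$, and then concluding with Koll\'ar's effective base point freeness. No complements and no potential-birationality decomposition are needed. Your route, however, has a genuine gap exactly at the point you flag. You need $D''\sim_{\Qq}(m+1-C)(B^{+}-B)+(1-C)B$ to be ($\Qq$-linearly equivalent to) an effective divisor, which at a component $D$ of $B_+$ forces $(m+1-C)(b^+_D-b_D)\geq (C-1)b_D$. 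Since $B^{+}-B$ is an effective representative of the big class $-(K_X+B)$, the generic situation is precisely that $\Supp(B^{+}-B)$ contains \emph{no} component of $B$, so $b^+_D=b_D$ while $b_D>0$; this is the main case, not an edge case, and there the inequality fails for every $m$.

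The proposed repair does not work. First, to choose the member of the base-point-free system $|nB'_Y|$ in the proof of Lemma~\ref{lem: psd ft change to eff} so that it contains the image of every component of $B_+$, you need $h^0(knB'_Y-\sum_i D_i)>0$ for some bounded $k$, and nothing guarantees this: base point freeness only lets a general member \emph{avoid} prescribed subvarieties, not contain prescribed divisors. Second, and more decisively, only $B=B_+-B_-$ is constrained to be pseudo-effective with $-(K_X+B)$ big; the positive part $B_+$ alone has no bounded degree (e.g.\ $B=\sum_{i}(H_{2i-1}-H_{2i})$ with $H_{2i-1}\sim H_{2i}$ of arbitrarily large degree satisfies all hypotheses with $n=1$), so no multiple of $nB'_Y$ bounded in terms of $d,\epsilon,n$ can dominate $\Supp B_+$. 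The honest way to rescue a decomposition argument here would be to first establish a uniform lower bound $\vol(-(K_X+B))\geq v(d,\epsilon,n)$ via Lemma~\ref{prop: bddvolfin maybenotlc} with $\Ii_0=\{1/n\}$ and then run the perturbation argument of Theorem~\ref{thm: approximation big bounded} on a log bounded effective representative --- but that is essentially the proof of Theorem~\ref{thm: eff bir bound volume more section gpair}(2.b), for which the present lemma is an input, so you would need to restructure carefully to avoid circularity. The paper's direct argument via bounded Cartier index and effective base point freeness sidesteps all of this.
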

\begin{proof}

Possibly replacing $X$ with a small $\Qq$-factorization, we may assume that $-(K_X+B)$ is $\Qq$-Cartier. Possibly replacing $-(K_X+B)$ with its minimal model, we may assume that $-(K_X+B)$ is big and nef. By Theorem \ref{thm: BAB}, $X$ is bounded. Thus there exist a positive integer $r$ depending only on $d$ and $\epsilon$, and a very ample Cartier divisor $H$ on $X$, such that $H^d\leq r$ and $-K_X\cdot H^{d-1}\leq r$. Since $B$ is pseudo-effective, $-n(K_X+B)\cdot H^{d-1}\leq -nK_X\cdot H^{d-1}\leq nr$. By \cite[Lemma 2.25]{Bir19}, there exists a positive integer $m_1$ depending only on $d,n$ and $\epsilon$ such that $m_1(K_X+B)$ is Cartier. By \cite[Theorem 1.1]{Kol93}, $m:=2m_1(d+2)!(d+1)$ satisfies our requirements.
\end{proof}

\begin{proof}[Proof of Theorem \ref{thm: eff bir bound volume more section gpair}(2.b)]
By Lemma \ref{lem: psd ft change to eff}, possibly replacing $X$ with a small $\Qq$-factorialization and each $B_i$ with its pullback, we may assume that $X$ is $\Qq$-factorial. Moreover, if we assume that each $B_i\geq 0$, then all the coefficients of $B\geq\delta$. Possibly replacing $B_1,\dots,B_s$ with the irreducible components of $B$, we may assume that $B_i$ are distinct prime divisors.

By Lemma \ref{lem: psd ft change to eff}, there exists a positive integer $n$ depending only on $d$ and $\epsilon$, and $\Qq$-divisors $B_i'\geq 0$ for every $i\in\{1,2,\dots,s\}$, such that $nB_i'\sim nB_i$ for each $i$.

Possibly reodering indices, we may let $\bar B:=\sum_{i=1}^{s_1}b_iB_i'+\sum_{i=s_1+1}^sb_iB_i'$ for some $s_1\in\{0,1,\dots,s\}$, such that $B'_i=0$ if and only if $i\geq s_1+1$. Since all the non-zero coefficients of $\bar B$ are $\geq\frac{\delta}{n}$, by Corollary $\ref{cor: bab log bdd}$, $(X,\bar B)$ is log bounded. Thus there exists a positive integer $u$ depending only on $d,\epsilon$ and $\delta$, such that $s_1\leq u$. Moreover, since $-(K_X+\bar B)$ is big, there exists a positive integer $N$ depending only on $d,\epsilon$ and $\delta$, such that for any $i\in\{1,2,\dots,s\}$ and any irreducible component $D$ of $\bar B$, $\mult_DB_i'\leq N$. 

By Theorem \ref{thm: approximation big bounded}, there exists a positive integer $n_0$ depending only on $d,\epsilon,\delta$ and $v$, such that $\vol(-(K_X+\bar B'))\geq\frac{v}{2}$ for any $\Rr$-divisor $B'$ on $X$ such that $||\bar B-B'||\leq\frac{1}{n_0}$ and $\Supp B'\subset\Supp\bar B$. 

Let $n_1:=Nun_0$. Then for any real numbers $b'_1,\dots,b'_s$ such that  $\max\{|b_i-b'_i|\}\leq\frac{1}{n_1}$ and any irreducible component $D$ of $\bar B$, \begin{align*}
    |\mult_D(\bar B-\sum_{i=1}^sb_i'B_i')|&=|\mult_D(\sum_{i=1}^s(b_i-b_i')B_i')|\leq\frac{1}{n_1}\sum_{i=1}^s|\mult_DB_i'|\\
    &=\frac{1}{n_1}\sum_{i=1}^{s_1}|\mult_DB_i'|\leq\frac{Nu}{n_1}=\frac{1}{n_0},
\end{align*}
which implies that $$||\bar B-\sum_{i=1}^sb_i'B_i'||\leq\frac{1}{n_0},$$
hence
$$\vol(-(K_X+\sum_{i=1}^sb_i'B_i))=\vol(-(K_X+\sum_{i=1}^sb_i'B'_i))\geq\frac{v}{2}.$$
In particular,
$$\vol(-(K_X+\sum_{i=1}^s\frac{\lceil n_1b_i\rceil}{n_1}B_i))\geq\frac{v}{2}.$$ 
By Lemma \ref{lem: finite rational coefficient eff bir}, there exists a positive integer $m_1$ depending only on $d,\epsilon,\delta$ and $v$, such that $$|-m_1(K_X+\sum_{i=1}^s\frac{\lceil n_1b_i\rceil}{n_1}B_i)|$$ defines a birational map and $n_1\mid m_1$. This is equivalent to say that
$$|\frac{m_1}{n_1}\cdot(-n_1)K_X+\frac{m_1}{n_1}\sum_{i=1}^s(\lfloor -n_1b_i\rfloor B_i)|$$
defines a birational map. The theorem follows from Theorem \ref{thm: birational one m to inf m}.
\end{proof}

\begin{proof}[Proof of Theorem \ref{thm: eff bir bound volume more section gpair}(1) and Theorem \ref{thm: eff bir bound volume more section gpair}(2.a)]
Possibly replacing $X$ with a small $\Qq$-factorialization, we may assume that $X$ is $\Qq$-factorial. Since $-(K_X+B)$ is big, $-K_X$ is also big and $\vol(-K_X)\geq v$. By Theorem \ref{thm: eff bir bound volume more section gpair}(2.b), there exists a positive integer $m_1$ depending only on $d$ and $\epsilon$, such that $|-m_1K_X|$ defines a birational map. By Lemma \ref{lem: potentially birational}(2), $-(2d+1)m_1K_X$ is potentially birational. 

By Lemma \ref{lem: psd ft change to eff}, there exists a positive integer $n$ depending only on $d$ and $\epsilon$ and a $\Qq$-divisor $P\geq 0$ on $X$, such that $-nK_X\sim nP$. By Corollary \ref{cor: bab log bdd}, $(X,P)$ is log bounded. By Theorem \ref{thm: approximation big bounded}, there exists a positive integer $\epsilon'$ depending only on $d,\epsilon$ and $v$, such that for any positive real number $\delta\leq\epsilon'$, $\vol(-(K_X+B+\delta P))\geq\frac{v}{2}$.

Let $m_2:=(2d+1)m_1$. We show that $m_0:=\lceil\frac{m_2+1}{\epsilon'}\rceil$ satisfies our requirements. For every  $m\geq m_0$, we have
\begin{align*}
    &((-mK_X-\lfloor mB\rfloor)-K_X)-(-(2d+1)m_1K_X)\\
    =&-m(\frac{m-m_2-1}{m}K_X+B)+\{mB\}\\
    \sim_{\mathbb R}&-m(K_X+B+\frac{m_2+1}{m}P)+\{mB\},
\end{align*}
and under the assumption of Theorem \ref{thm: eff bir bound volume more section gpair}(2.a),
\begin{align*}
    &((-mK_X-\sum_{i=1}^s\lfloor mb_i\rfloor B_i)-K_X)-(-(2d+1)m_1K_X)\\
    =&-m(\frac{m-m_2-1}{m}K_X+B)+\sum_{i=1}^s\{mb_i\}B_i\\
    \sim_{\mathbb R}&-m(K_X+B+\frac{m_2+1}{m}P)+\sum_{i=1}^s\{mb_i\}B_i.
\end{align*}

Since $m\geq\lceil\frac{m_2+1}{\epsilon'}\rceil$, $-m(K_X+B+\frac{m_2+1}{m}P)$ is big, which implies that
$$(-mK_X-\lfloor mB\rfloor)-K_X-(-(2d+1)m_1K_X)$$
is big, and under the assumption of Theorem \ref{thm: eff bir bound volume more section gpair}(2.a),
$$(-mK_X-\sum_{i=1}^s\lfloor mb_i\rfloor B_i-K_X)-(-(2d+1)m_1K_X)$$
is big. By Lemma \ref{lem: pot bir plus eff is pot bir}, 
$$(-mK_X-\lfloor mB\rfloor)-K_X$$
is potentially birational, and under the assumption of Theorem \ref{thm: eff bir bound volume more section gpair}(2.a),
$$(-mK_X-\sum_{i=1}^s\lfloor mb_i\rfloor B_i)-K_X$$
is potentially birational. Theorem \ref{thm: eff bir bound volume more section gpair}(1) and Theorem \ref{thm: eff bir bound volume more section gpair}(2.a) follow from Lemma \ref{lem: potentially birational}(1).
\end{proof}

\begin{proof}[Proof of Theorem \ref{thm: eff bir fin coeff gpair}]
For any $X$ and $B$ as in the assumption, by Theorem \ref{thm: bddvolacc maynotbelc}, $\vol(-(K_X+B))$ is bounded from below by a positive real number $v$ depending only on $d,\epsilon$ and $\Ii_0$. The Theorem follows from Theorem \ref{thm: eff bir bound volume more section gpair}.
\end{proof}

\section{Log general type pairs and effective Iitaka fibration}

In this section, we gather several state-of-the-art results on effective birationality and effective Iitaka fibrations, and slightly generalize them. 

%These results may be well-known to experts but we decide to formulate them here for future usage.

%Despite Birkar's effective birationality Theorem \ref{thm: birkar -mK_X}, 
%There are several other theorems on effective birationality which have been proved in the past 15 years:
%\begin{thm}[{\cite[Theorem 1.1]{HM06}}]\label{thm: hm06 1.1}Let $d$ be a positive integer. Then there exists a positive integer $m_0$ depending only on $d$, such that for any smooth projective variety $X$ of dimension $d$ such that $K_X$ is big, $|mK_X|$ defines a birational map for any integer $m\geq m_0$.\end{thm}
\begin{comment}
\begin{thm}[{{\cite[Theorem 1.1]{HM06}},\cite[Theorem 1.3(3)]{HMX14}}]\label{thm: HMX14 eff bir}
Let $d$ be a positive integer and $\Ii\subset [0,1]$ a DCC set. Then there exists a positive integer $m$ depending only on $d$ and $\Ii$ satisfying the following. Assume that
\begin{enumerate}
    \item $(X,B)$ is an lc pair of dimension $d$,
    \item $B\in\Ii$, and
    \item $K_X+B$ is big,
\end{enumerate}
then $|m(K_X+B)|$ defines a birational map.
\end{thm}
\end{comment}

\begin{thm}[{{{\cite[Theorem 1.1]{HM06}},\cite[Theorem 1.3(3)]{HMX14}},\cite[Theorem 1.3]{BZ16}}]\label{thm: BZ16 eff bir}
Let $d,n$ be two positive integers and $\Ii\subset [0,1]$ a DCC set. Then there exists a positive integer $m_0$ depending only on $d,n$ and $\Ii$ satisfying the following. Assume that
\begin{enumerate}
    \item $(X,B)$ is an lc pair of dimension $d$,
    \item $B\in\Ii$, 
    \item $M$ is a nef $\Qq$-divisor on $X$ such that $nM$ is Cartier, and
    \item $K_X+B+M$ is big,
\end{enumerate}
then $|m(K_X+B+M)|$ defines a birational map for any positive integer $m$ divisible by $m_0$.
\end{thm}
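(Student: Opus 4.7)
The plan is to reduce this to the existing ``single-$m$'' version of the effective birationality theorem and then upgrade the conclusion to all $m$ divisible by $m_0$ via the ``one $m$ to all $m$'' technique encoded in Theorem \ref{thm: birational one m to inf m}. First, I will invoke the known form of \cite[Theorem 1.3]{BZ16} to produce a positive integer $m_1$, divisible by $n$ and depending only on $d, n, \Ii$, such that $|m_1(K_X+B+M)|$ defines a birational map; by possibly passing to a small $\Qq$-factorialization I may assume $X$ is $\Qq$-factorial.

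Next, I will rewrite $B+M$ in the form required by Theorem \ref{thm: birational one m to inf m}: since $nM$ is a nef Cartier divisor, hence a pseudo-effective Weil divisor, I set
\[
B' := B + M = \sum_{i=1}^s b_i B_i + \tfrac{1}{n}(nM),
\]
so that $B'$ is a combination of pseudo-effective Weil divisors with coefficients in $\Ii \cup \{1/n\}$, and $K_X+B'$ is big. Applying Theorem \ref{thm: birational one m to inf m} with this $B'$, with the integer parameter ``$n$'' in that statement set to $1$ and with $m_2=1$, hypothesis (4) amounts exactly to the birationality of $|m_1(K_X+B+M)|$ obtained above (using $n\mid m_1$ so that $m_1 M$ is Cartier). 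To verify hypothesis (5) I need a uniform $\epsilon>0$, depending only on $d, n, \Ii$, such that any perturbation of the coefficients of $B'$ by at most $\epsilon$ supported on $\mathrm{Supp}(B')$ preserves the bigness of $K_X+B'$. This is the log general type analog of Theorem \ref{thm: approximation big bounded}: the log boundedness of the pairs $(X,\mathrm{Supp}(B+M))$ satisfying our hypotheses follows from the boundedness inputs already present in the proof of \cite[Theorem 1.3]{BZ16}, and bigness-preservation under a small coefficient perturbation then follows by applying Lemma \ref{lem: volineq} to a very ample divisor on the bounded family. With (4) and (5) in hand, Theorem \ref{thm: birational one m to inf m} yields an integer $m_0'$ depending only on $d, n, \Ii$ such that
\[
|mK_X+\textstyle\sum_{i=1}^s\lfloor mb_i\rfloor B_i + \lfloor m/n\rfloor (nM)|
\]
defines a birational map for every integer $m\ge m_0'$; setting $m_0 := n m_0'$ and restricting to $m$ divisible by $m_0$, the quantity $\lfloor m/n\rfloor (nM) = mM$ is a Cartier divisor, so this linear system agrees with $|m(K_X+B+M)|$.

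The main obstacle I anticipate is the perturbation step, namely hypothesis (5) of Theorem \ref{thm: birational one m to inf m}: unlike the Fano-type setting of Theorem \ref{thm: approximation big bounded}, where the uniform log boundedness is supplied directly by the BAB theorem, here one must extract log boundedness of $(X,\mathrm{Supp}(B+M))$ from the MMP-and-volume-bound machinery used in Birkar--Zhang's proof in order to make the perturbation constant $\epsilon$ depend only on $d, n$ and $\Ii$, and then adapt the volume inequality argument to the big (rather than anti-big) divisor $K_X+B+M$.
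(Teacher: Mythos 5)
This statement is not proved in the paper at all: it is quoted verbatim from \cite[Theorem 1.3]{BZ16} (with \cite{HM06,HMX14} cited for special cases), and the paper's own new content is the strictly stronger Theorem \ref{thm: IMPROVED all big theorems on effective birationality}, whose proof is essentially the argument you outline. As a proof of the statement as written, your proposal is circular: the ``known form of \cite[Theorem 1.3]{BZ16}'' that you invoke in the first step is exactly the theorem to be proved, so once you have invoked it there is nothing left to do. Moreover, even granting only a single-$m$ input, the conclusion ``for all $m$ divisible by $m_0$'' needs none of the heavy machinery: if $D:=\lfloor m_0(K_X+B+M)\rfloor$ is an integral divisor with $|D|$ birational, then for $m=km_0$ one has $\lfloor m(K_X+B+M)\rfloor = kD+E$ with $E\geq 0$ integral (since $\lfloor km_0B\rfloor\geq k\lfloor m_0B\rfloor$ and $mM=kм_0M$ is integral), and $|kD+E|\supseteq |kD|+E$ defines a birational map whenever $|D|$ does. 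Theorem \ref{thm: birational one m to inf m} is only needed when one wants \emph{all} $m\geq m_0$ with $mM$ Weil, which is the paper's Theorem \ref{thm: IMPROVED all big theorems on effective birationality}, not the present statement.

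The substantive gap is in your proposed verification of hypothesis (5) of Theorem \ref{thm: birational one m to inf m}. You want a uniform perturbation constant $\epsilon$ via ``log boundedness of the pairs $(X,\Supp(B+M))$'' together with Lemma \ref{lem: volineq}, in analogy with Theorem \ref{thm: approximation big bounded}. But in the log general type setting there is no such boundedness: already with $B=M=0$ the class of lc varieties $X$ of dimension $d$ with $K_X$ big is wildly unbounded (all smooth surfaces of general type, say), and nothing in the proof of \cite[Theorem 1.3]{BZ16} bounds the pairs themselves --- what is bounded there is only the family of images of the birational maps. Consequently the very ample divisor $S$ with $S^d\leq r$ on which the volume estimate of Theorem \ref{thm: approximation big bounded} rests does not exist uniformly, and the perturbation step collapses. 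The correct tool, and the one the paper uses in the proof of Theorem \ref{thm: IMPROVED all big theorems on effective birationality}, is \cite[Theorem 8.1]{BZ16}: a DCC/global-ACC statement asserting that bigness of $K_X+B+M$ survives a uniform decrease of the coefficients of $B$ and $M$, proved by ACC methods rather than boundedness. If you replace your boundedness argument by an appeal to that result, the rest of your reduction (taking $B'=B+\tfrac{1}{n}(nM)$, $m_2=1$, the theorem's $n=1$) matches the paper's proof of the improved statement.
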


%Theorem \ref{thm: HMX14 eff bir} and

By using methods in this paper, we could slightly improve Theorem \ref{thm: BZ16 eff bir}:

\begin{thm}\label{thm: IMPROVED all big theorems on effective birationality}
Let $d,n$ be two positive integers and $\Ii\subset [0,1]$ a DCC set. Then there exists a positive integer $m_0$ depending only on $d,n$ and $\Ii$ satisfying the following. Assume that
\begin{enumerate}
    \item $(X,B)$ is an lc pair of dimension $d$,
    \item $B\in\Ii$, 
    \item $M$ is a nef $\Qq$-divisor on $X$ such that $nM$ is Cartier, and
    \item $K_X+B+M$ is big,
\end{enumerate}
then $|m(K_X+B+M)|$ defines a birational map for any positive integer $m\geq m_0$ such that $mM$ is a Weil divisor. In particular, if $K_X+B$ is big, then $|m(K_X+B)|$ defines a birational map for any positive integer $m\geq m_0$.
\end{thm}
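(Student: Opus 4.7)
The plan is to reduce Theorem \ref{thm: IMPROVED all big theorems on effective birationality} to Theorem \ref{thm: BZ16 eff bir} and then to bootstrap the conclusion via the technique of Theorem \ref{thm: birational one m to inf m}. First I would apply Theorem \ref{thm: BZ16 eff bir} to produce a positive integer $m_1=m_1(d,n,\Ii)$ divisible by $n$ such that $|m_1(K_X+B+M)|$ defines a birational map; setting $L:=K_X+B+M$, Lemma \ref{lem: potentially birational}(2) then gives that $(2d+1)m_1L$ is potentially birational. After passing to a small $\Qq$-factorialization I may assume that $X$ is $\Qq$-factorial.

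Next I would cast the problem in the form of Theorem \ref{thm: birational one m to inf m} by absorbing $M$ into the pseudo-effective boundary. Let $nM$ be treated as an additional pseudo-effective Weil component (it is Cartier, hence Weil, and nef, hence pseudo-effective) with coefficient $1/n$. After collecting the coefficients on common prime components, write $\tilde B=\sum_j\beta_jD_j$ with $D_j$ distinct primes and $\beta_j$ in a DCC set depending only on $\Ii$ and $n$, so that $K_X+\tilde B=L$ is big. Applying Theorem \ref{thm: birational one m to inf m} with $n_{\mathrm{thm}}=1$, $m_1=m_1$, $m_2=1$, and a suitable $\epsilon_0>0$, conditions (1)--(3) are immediate. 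Condition (4) reduces to Theorem \ref{thm: BZ16 eff bir}: since $n\mid m_1$, one checks that $\sum_j\lfloor m_1\beta_j\rfloor D_j=\lfloor m_1B\rfloor+m_1M$, and hence $|m_1K_X+\sum_j\lfloor m_1\beta_j\rfloor D_j|=|m_1L|$.

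The hard part will be verifying condition (5): producing a uniform $\epsilon_0>0$, depending only on $d,n,\Ii$, such that $K_X+\tilde B'$ remains big for every $\tilde B'$ whose coefficients are within $\epsilon_0$ of those of $\tilde B$. Since $|m_1L|$ defines a birational map we have the uniform lower bound $\vol(L)\geq 1/m_1^d$; to upgrade this to uniform perturbation stability I would invoke Hacon-M\textsuperscript{c}Kernan-Xu's log birational boundedness for lc pairs of log general type with $B\in\Ii$ DCC and volume bounded below, which after a birational modification places the pair in a log bounded family on which an argument analogous to Theorem \ref{thm: approximation big bounded} supplies the desired $\epsilon_0$. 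With (5) in hand, Theorem \ref{thm: birational one m to inf m} produces $m_0=m_0(d,n,\Ii)$ such that $|mK_X+\sum_j\lfloor m\beta_j\rfloor D_j|$ defines a birational map for every $m\geq m_0$; for $m$ such that $mM$ is Weil this linear system coincides with $|m(K_X+B+M)|$, completing the proof. The in particular statement for $K_X+B$ big follows by taking $M=0$.
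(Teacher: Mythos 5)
Your overall strategy coincides with the paper's: apply Theorem \ref{thm: BZ16 eff bir} to get birationality for a single $m_1$ divisible by $n$, recast $M$ as a pseudo-effective Weil component of the boundary, and then bootstrap to all $m\geq m_0$ via Theorem \ref{thm: birational one m to inf m}. Your bookkeeping for condition (4) and for the identification $\sum_j\lfloor m\beta_j\rfloor D_j=\lfloor mB\rfloor+mM$ when $mM$ is Weil is correct (the paper does the same with $M_0:=n_0M$, $n_0$ minimal, which changes nothing essential).

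The gap is exactly where you flag it: condition (5). What you need is a uniform $\epsilon_0=\epsilon_0(d,n,\Ii)>0$ such that $K_X+B'+\delta\, (nM)$ stays big whenever $\Supp B'\subset\Supp B$, $\|B-B'\|\leq\epsilon_0$ and $|\delta-\tfrac1n|\leq\epsilon_0$. Your proposed derivation from Hacon--M\textsuperscript{c}Kernan--Xu log birational boundedness does not cover this: HMX's boundedness statements concern lc pairs $(X,B)$ with $K_X+B$ big and control (a birational model of) $\Supp B$ only; they say nothing about $\Supp M$, so you cannot perturb the coefficients of the $M$-components inside a bounded family obtained that way. Moreover, decreasing the coefficient of the nef part is genuinely dangerous ($K_X+B+(1-t)M=(K_X+B+M)-tM$, and subtracting a nef divisor from a big divisor need not preserve bigness), and Theorem \ref{thm: approximation big bounded} as stated applies to a log \emph{bounded} family, not a log \emph{birationally} bounded one, so the ``analogous argument'' would require the volume-deformation machinery of \cite[Section 3]{HMX13} and \cite[Section 8]{BZ16} rather than a direct transplant. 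In short, you are attempting to reprove \cite[Theorem 8.1]{BZ16}, and the sketch omits precisely the part of that proof which handles the polarization $M$. The paper closes this step in one line by citing \cite[Theorem 8.1]{BZ16}, which is verbatim the perturbation statement you need; with that citation in place of your boundedness sketch, your argument is complete. (A minor further point: since $(X,B)$ is only lc, the paper passes to a $\Qq$-factorial dlt modification, enlarging $\Ii$ to $\Ii\cup\{1\}$, rather than a small $\Qq$-factorialization; if you insist on a small $\Qq$-factorialization for lc pairs you should justify its existence.)
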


\begin{comment}
\begin{enumerate}
    \item if $K_X+B$ is big, then  $|m(K_X+B)|$ defines a birational map for any positive integer $m\geq m_1$, and
    \item if $K_X+B+M$ is big, then $|m(K_X+B+M)|$ defines a birational map for any positive integer $m\geq m_2$ such that $mM$ is a Weil divisor.
\end{enumerate}
First we prove Theorem \ref{thm: IMPROVED all big theorems on effective birationality}(2). For any $(X,B+M)$ as in the assumption,

Theorem \ref{thm: IMPROVED all big theorems on effective birationality}(1) follows from Theorem \ref{thm: IMPROVED all big theorems on effective birationality}(2) by taking $n=1$ and $M=0$.
\end{comment}

\begin{proof}[Proof of Theorem \ref{thm: IMPROVED all big theorems on effective birationality}]
 Possibly replacing $\Ii$ with $\Ii\cup\{1\}$, $(X,B)$ with a dlt modification, and $M$ with its pullback, we may assume that $X$ is $\Qq$-factorial. 
 
  Assume that $B=\sum_{i=1}^sb_iB_i$ where $B_i$ are the irreducible components of $B$. Let $n_0$ be the least positive integer such that $M_0:=n_0M$ is a Weil divisor. Since $nM$ is Cartier, $n_0\mid n$. By Theorem \ref{thm: BZ16 eff bir}, there exists a positive integer $m_0'$ depending only on $d,n$ and $\Ii$ such that $|m_0'n(K_X+B+M)|$ defines a birational map. This is equivalent to say that
 $$|m_2m_1K_X+\sum_{i=1}^s\lfloor m_2m_1b_i\rfloor B_i+\lfloor\frac{m_2m_1}{n_0}\rfloor M_0|$$
 defines a birational map, where $m_2:=1$ and $m_1:=m_0'n$.
 
By \cite[Theorem 8.1]{BZ16}, there exists a positive real number $\epsilon$ depending only on $d,n$ and $\Ii$, such that for any $\Rr$-divisor $B'$ on $X$ and real number $\delta$, if $\Supp B'\subset\Supp B$,  $||B-B'||\leq\epsilon$, and $|\frac{1}{n_0}-\delta|<\epsilon$, then $K_X+B'+\delta M_0$ is big. By Theorem \ref{thm: birational one m to inf m}, there exists a positive integer $m_0$ depending only on $d,n$ and $\Ii$, such that
  $$|mK_X+\sum_{i=1}^s\lfloor mb_i\rfloor B_i+\lfloor\frac{m}{n_0}\rfloor M_0|$$
  defines a birational map for any positive integer $m\geq m_0$. In particular, when $mM$ is a Weil divisor, $\lfloor\frac{m}{n_0}\rfloor M_0=\frac{m}{n_0}M_0=mM$, hence $|m(K_X+B+M)|$ defines a birational map.
  
  When $K_X+B$ is big and $M=0$, we have $M_0=0$, hence $|m(K_X+B)|$ defines a birational map.
 \end{proof}

Theorem \ref{thm: IMPROVED all big theorems on effective birationality} immediately implies the following theorem on effective Iitaka fibrations, which slightly improves \cite[Theorem 1.2]{BZ16}.

\begin{thm}\label{thm: effective iitaka sufficiently large}
    Let $d,b$ and $\beta$ be three positive integers, then there exists a positive integer $m_0=m_0(d,b,\beta)$ depending only on $d,b$ and $\beta$ satisfying the following. Assume that
    \begin{enumerate}
        \item $W$ is a smooth projective variety of dimension $d$ such that $\kappa(W)\geq 0$,
        \item $V\rightarrow W$ is a resolution of $W$,
        \item $f: V\rightarrow X$ is an Iitaka fibration of $K_W$, and
        \item  $$b:=\min\{u\in\mathbb N^+\mid |uK_F|\not=\emptyset\},$$
        where $F$ is a very general fiber of $V\rightarrow X$,
        \item $\tilde F$ is the smooth model of the $\mathbb Z/(b)$-cover of $F$ over the unique divisor in $|bK_F|$,
        \item $\beta:=\dim H^{\dim\tilde F}(\tilde F,\mathbb C)$, and
        \item $N:=\lcm\{n\in\mathbb N^+\mid \varphi(n)\leq\beta\}$, where $\varphi$ is the Euler function.
    \end{enumerate}
    Then $|mK_W|$ defines an Iitaka fibration for any integer $m\geq m_0$ such that $Nb\mid m$.
\end{thm}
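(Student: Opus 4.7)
The plan is to reduce the statement to Theorem \ref{thm: IMPROVED all big theorems on effective birationality} applied on the base of the Iitaka fibration, via the canonical bundle formula, following the strategy of \cite[Theorem 1.2]{BZ16}. The improvement over loc.\ cit.\ is essentially bootstrapped from our strengthened effective birationality Theorem \ref{thm: IMPROVED all big theorems on effective birationality}, which holds for every sufficiently large integer $m$ satisfying the relevant Cartier condition, rather than only for multiples of a single integer $m_0$.

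First I would apply the Fujino--Mori canonical bundle formula to $f\colon V\to X$. After passing to suitable birational models of $V$ and $X$, which does not affect $|mK_W|$ up to birational equivalence since $W$ is smooth and $V\to W$ is birational, one obtains
\[
bK_V \;=\; f^{*}\bigl(bK_X + bB_X + bM_X\bigr),
\]
where $B_X$ is an effective $\Qq$-divisor with coefficients in a DCC set $\Ii\subset[0,1]$ depending only on $d$ and $b$, and $M_X$ is a nef $\Qq$-divisor, with $(X,B_X)$ lc and $NbM_X$ Cartier. The DCC property of $B_X$ follows from the standard log canonical threshold description of the discriminant together with the ACC for lcts on pairs of dimension $\le d$; the Cartier-index bound $Nb$ on $M_X$ uses the Hodge-theoretic fact that the monodromies of the variation of Hodge structures associated to the $\Zz/(b)$-cover $\tilde F\to F$ are quasi-unipotent with eigenvalues roots of unity of order dividing $N$, since the rank of the underlying local system is bounded by $\beta$. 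Because $f$ is an Iitaka fibration of $K_W$, the divisor $K_X+B_X+M_X$ is big.

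Next, applying Theorem \ref{thm: IMPROVED all big theorems on effective birationality} with dimension $\dim X\le d$, index $n=Nb$, and DCC set $\Ii$ yields a positive integer $m_0$ depending only on $d$, $b$, and $\beta$ such that $|m(K_X+B_X+M_X)|$ defines a birational map for every integer $m\ge m_0$ with $mM_X$ Weil. In particular this holds for every $m\ge m_0$ with $Nb\mid m$; for such $m$ we have $mK_V=f^{*}\bigl(m(K_X+B_X+M_X)\bigr)$ as Weil divisors, so that $H^{0}(V,mK_V)\cong H^{0}(X,m(K_X+B_X+M_X))$ and $|mK_V|$ defines a rational map birational to $f$. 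Since $V\to W$ is birational and $W$ is smooth, $|mK_W|$ then defines an Iitaka fibration of $K_W$.

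The main obstacle is to set up the canonical bundle formula so that the DCC bound on $B_X$ and the Cartier-index bound $Nb$ on $M_X$ are realised simultaneously on one birational model of $X$; both ingredients are standard by now and already combined in \cite{BZ16}, so the only genuinely new input is the substitution of our Theorem \ref{thm: IMPROVED all big theorems on effective birationality} for \cite[Theorem 1.3]{BZ16}, which is what transfers the ``for all $m\ge m_0$'' behaviour from the base pair to $W$ itself.
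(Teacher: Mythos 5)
Your proposal is correct and follows essentially the same route as the paper: both reduce to the base of the Iitaka fibration via the Viehweg--Zhang/Fujino--Mori canonical bundle formula, producing an lc pair $(X,B)$ with coefficients in a DCC set together with a nef part $M$ of Cartier index dividing $Nb$, and then invoke Theorem \ref{thm: IMPROVED all big theorems on effective birationality} with $n=Nb$. The only cosmetic difference is that the paper writes down the explicit DCC set $\Ii(b,N)=\{\tfrac{bNu-v}{bNu}\}$ and phrases the transfer back to $W$ as the equivalence ``$|mK_W|$ defines an Iitaka fibration iff $|m(K_X+B+M)|$ defines a birational map'' for $b\mid m$, rather than via a literal pullback identity of Weil divisors, but the argument is the same.
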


\begin{proof}[Proof of Theorem \ref{thm: effective iitaka sufficiently large}]
We follow the same lines as the proof of \cite[Theorem 1.2]{BZ16}. 

Possibly replacing $W$ with $V$, we may assume that the Iitaka fibration is $f: W\rightarrow X$. We may assume that $\kappa(W)\geq 1$, otherwise there is nothing to prove. We let 
$$\Ii=\Ii(b,N):=\{\frac{bNu-v}{bNu}\mid u,v\in\mathbb N^+,v\leq bN\}$$
which is a DCC set in $[0,1)$.

By \cite[Theorem 1.2]{VZ09} and \cite{FO00}, possibly replacing $W$ and $X$ with sufficiently high resolutions, we may assume that $X$ is smooth, and there exist a boundary $B$ on $X$, and a
nef $\Qq$-divisor $M$ on $X$, such that
\begin{itemize}
\item $B$ is a simple normal crossing $\Qq$-divisor whose coefficients are contained in $\Ii$, in particular, $(X,B)$ is lc,
    \item $NbM$ is nef Cartier,
    \item $K_X+B+M$ is big, and 
    \item for any positive integer $m$ divisible by $b$, $|mK_W|$ defines an Iitaka fibration if and only if $|m(K_X+B+M)|$ defines a birational map.
\end{itemize}
Theorem \ref{thm: effective iitaka sufficiently large} immediately follows from Theorem \ref{thm: IMPROVED all big theorems on effective birationality}.
\end{proof}

%$$K_W\sim_{\mathbb Q}f^*(K_X+B+M),$$where $B$ is the discriminant part of $f$ and $M$ is the moduli part of $f$, 

%$ is a simple normal crossing $\Qq$-divisor whose coefficients are contained in $\item $$H^0(W,mbK_W)\cong H^0(X,\lfloor mb(K_X+B+M)\rfloor)$$    for any positive integer $m$, and    \item $|mbK_W|$ defines an Iitaka fibration if and only if $|mb(K_X+B+M)|$ defines a birational map.

\section{Examples}\label{sec: example for eff bir}

In this section we give examples which show that the assumptions of our main theorems are necessary. We state the following lemma, which provides a very convenient necessary condition for a divisor to define a birational map, and will be repeatedly used in this section.
\begin{lem}[c.f. {\cite[Lemma 2.2]{HM06}}]\label{lem: birational vol 1}
    Let $X$ be a normal projective variety and $D$ a $\Qq$-Cartier Weil divisor on $X$ such that $|D|$ defines a birational map. Then $\vol(D)\geq 1$.
\end{lem}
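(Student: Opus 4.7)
The plan is to resolve the linear system $|D|$ and reduce the statement to a self-intersection computation on a smooth birational model. First I would take a log resolution $\mu:W\to X$ of the base locus of $|D|$ and decompose $\mu^{*}D = M + F$, where $|M|$ is the (base-point-free) mobile part and $F\geq 0$ is the fixed part. The morphism $\phi_{M}:W\to Y\subset\Pp^{N}$ induced by $|M|$ factors the rational map $\phi_{D}:X\dashrightarrow\Pp^{N}$ associated to $|D|$ through $\mu$; since $\phi_{D}$ and $\mu$ are both birational, $\phi_{M}$ is birational onto its image $Y$ as well.

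Next, since $M$ is nef and base-point-free, the projection formula gives
$$M^{n}\;=\;\deg(\phi_{M})\cdot\deg(Y)\;=\;\deg(Y),$$
where $n=\dim X$, because $\phi_{M}$ has degree one. Any positive-dimensional irreducible subvariety of projective space has degree at least $1$, so $M^{n}\geq 1$. By nefness of $M$ we have $\vol(M)=M^{n}\geq 1$, and effectivity of $F$ yields $\vol(M+F)\geq\vol(M)$ via the inclusions $H^{0}(W,mM)\hookrightarrow H^{0}(W,m(M+F))$ obtained by multiplying with a fixed section of $mF$. Since $D$ is $\Qq$-Cartier, volume is a birational invariant, whence $\vol(D)=\vol(\mu^{*}D)=\vol(M+F)\geq 1$.

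The argument is essentially routine. The only technical point is the birational invariance $\vol(D)=\vol(\mu^{*}D)$ for a $\Qq$-Cartier Weil divisor $D$, which follows from the identity $\mu_{*}\mathcal{O}_{W}(m\mu^{*}D)=\mathcal{O}_{X}(mD)$ for every positive integer $m$ such that $mD$ is Cartier. I do not anticipate any serious obstacle.
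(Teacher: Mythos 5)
Your argument is correct and is essentially the standard proof of the cited result \cite[Lemma 2.2]{HM06}; the paper itself gives no proof of this lemma, only the citation, so there is nothing to diverge from. The one point worth tightening is that for a non-Cartier $D$ the pullback $\mu^{*}D$ need not be an integral divisor, so the mobile/fixed decomposition should be performed on $\lfloor\mu^{*}D\rfloor$ (or on $r\mu^{*}D$ with $r$ the Cartier index), but your restriction to multiples $m$ with $mD$ Cartier already absorbs this.
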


\subsection{Necessity of integrability of the nef part}
The next example shows that we need to study linear systems of the forms  $|-mK_X-\sum_{i=1}^s\lceil mb_i\rceil B_i|$  instead of $|-m(K_X+B)|$ in Theorem \ref{thm: eff bir fin coeff gpair} and Theorem \ref{thm: eff bir bound volume more section gpair}(2) when $B\not\geq 0$, and also shows that the requirement ``$mM$ is a Weil divisor" in Theorem \ref{thm: IMPROVED all big theorems on effective birationality} is necessary.

\begin{ex}
Let $n$ be a positive integer, and $X$ a curve of genus $0$ (resp. genus $2$). For every positive integer $s$, let $p_1,\dots,p_{2s}$ be $2s$ general closed points on $X$, and let $B^s=M_s:=\frac{1}{n}\sum_{i=1}^s(p_{2i-1}-p_{2i})$. Notice that $\deg(-(K_X+B^s))=2$ (resp. $\deg(K_X+M_s)=2$), $\sum_{i=1}^s(p_{2i-1}-p_{2i})$ is a nef Cartier divisor, and $mM_s$ is a Weil divisor if and only if $n\mid m$.

For any positive integer $m<\frac{s}{2}$ such that $n\nmid m$,
$$\deg(\lfloor -m(K_X+B^s)\rfloor)=2m-s<0\ (\text{resp.}\ \deg(\lfloor m(K_X+M_s)\rfloor)=2m-s<0).$$
Therefore, if $m<\frac{s}{2}$, then $|\lfloor -m(K_X+B^s)\rfloor|$ (resp. $|\lfloor m(K_X+M_s)\rfloor|$) defines a birational map if and only if $n\mid m$. 
\end{ex}

The next example shows that in Theorem \ref{thm: IMPROVED all big theorems on effective birationality}, $m_2$ relies on the Cartier index of $M$ (i.e. relies on $n$) even when $\dim X=2, B=0$, and $M$ is integral. %We remark that when $\dim X=1$, any Weil divisor is Cartier.

\begin{ex}
For any positive integer $n$, let $X_n:=\mathbb P(1,1,n)$, $H_{1,n}$ the first toric invariant divisor (i.e. corresponds to the first $1$), and $M_n=-K_{X_n}+H_{1,n}$. Then $M_n$ is nef integral and $K_{X_n}+M_n=H_{1,n}$ is big. However, since $$\deg(m(K_{X_n}+M_n))=m\deg H_{1,n}=m,$$
$m(K_{X_n}+M_n)$ does not define a birational map when $m<n$.
\end{ex}

\subsection{Necessity of the lower bound of the volume}

 The following example shows that the assumption $\vol(-(K_X+B))\geq v$ in Theorem \ref{thm: eff bir bound volume more section gpair}(2.b) is necessary even when $\dim X=1$ and each $B_i\geq 0$:
 
 \begin{ex}\label{ex: p1 dcc vol no bound counter}
Let $X:=\mathbb P^1$, $p_1,p_2,p_3,p_4$ four distinct closed points on $X$, and $B^n:=(\frac{1}{2}-\frac{1}{2n})(p_1+p_2+p_3+p_4)$ for every integer $n\geq 2$. Then all the coefficients of $B^n$ are $\geq\frac{1}{4}$, $(X,B^n)$ is $\frac{1}{2}$-lc log Fano for every $n$, and $\deg(-(K_X+B^n))=\frac{2}{n}$. For every positive integer $m<n$, $\deg(\lfloor-m(K_X+B^n)\rfloor)\leq 0$, so $|-m(K_X+B^n)|$ does not define a birational map.
 \end{ex}
 
 \subsection{Necessity of the lower bound of the coefficients}
 
 The following example shows that the assumption ``$b_i\geq\delta$" in Theorem \ref{thm: eff bir bound volume more section gpair}(2.b) is necessary even when $\dim X=1$ and each $B_i\geq 0$:

\begin{ex}\label{ex: p1 with many points}
Consider $X:=\mathbb P^1$ and $B^n:=\frac{1}{2n}\sum_{i=1}^{2n}p_i$ for every positive integer $n$, where $p_i$ are distinct closed points on $X$. Then $(X,B^n)$ is $\frac{1}{2}$-lc and $\deg(-(K_X+B^n))=1$ for every $n$. However, $|-m(K_X+B^n)|$ does not define a birational map for any $m\leq 2n-1$.
\end{ex}

\subsection{Necessity of the singularity assumption}
The following example shows that the assumption ``$(X,\Delta)$ is projective $\epsilon$-lc of dimension $d$ for some boundary $\Delta$ such that $K_X+\Delta\sim_{\mathbb R}0$ and $\Delta$ is big" is necessary in Theorem \ref{thm: eff bir fin coeff gpair} and Theorem \ref{thm: eff bir bound volume more section gpair}, even when $\dim X=2$ and $B=0$. We remark that when $\dim X=1$, $X=\mathbb P^1$ and the assumption is automatically satisfied by letting $\epsilon=\frac{1}{2}$ and $\Delta=\frac{1}{2}\sum_{i=1}^4p_i$, where $p_i$ are different closed points on $X$.
 
 \begin{ex}\label{ex: gen hyp p111n}
Consider the general hypersurfaces $X_{n+1}\subset\mathbb P(1,1,1,n)$ of degree $n+1$ for every positive integer $n$. By \cite[Theorem 8.1]{IF00}, $X$ is quasismooth klt Fano, and $\vol(-K_{X_{n+1}})=\frac{4(n+1)}{n}>4$, but $X_{n+1}$ is not $\frac{2}{n}$-klt.

Notice that the map defined by $|-mK_{X_{n+1}}|$ cannot be birational for any positive integer $m<\frac{n}{2}$. Indeed, for any $n\geq 3$ and positive integer $m<\frac{n}{2}$, the map given by $|-mK_{X_{n+1}}|$ must factorize through $\mathbb P(1,1,1)\cong\mathbb P^2$. Since $\deg X_{n+1}=n+1>1+1+1$, the map defined by $|-mK_{X_{n+1}}|$ cannot be birational.
\end{ex}
 
The following well-known example shows that the assumption ``$(X,\Delta)$ is projective $\epsilon$-lc of dimension $d$ for some boundary $\Delta$ such that $K_X+\Delta\sim_{\mathbb R}0$ and $\Delta$ is big" is necessary in Theorem \ref{thm: bddvolacc maynotbelc} even when $\dim X=2$ and $B=0$. Notice that when $d=1$, $(X=\mathbb P^1,\Delta=\frac{1}{2}\sum_{i=1}^4p_i)$ is $\frac{1}{2}$-lc, where $p_i$ are different closed points on $X$.
\begin{ex}[{\cite[22.5 Proposition]{KM99}, \cite[Example 2.1.1]{HMX14}}]
Let $p,q,r$ be three coprime positive integers. Then $\mathbb P(p,q,r)$ is a klt del Pezzo surface such that $\vol(-K_{\mathbb P(p,q,r)})=\frac{(p+q+r)^2}{pqr}$. However  $$\{\frac{(p+q+r)^2}{pqr}\mid p,q,r\in\mathbb N^+,(p,q)=(q,r)=(p,r)=1\}$$
is dense in $\mathbb R^+$.
\end{ex}

\subsection{The linear system $|-\lfloor m(K_X+B)\rfloor|$}

Even if we replace $|-m(K_X+B)|$ with the linear system $|-\lfloor m(K_X+B)\rfloor|$ which has more sections, the assumption ``$B\in\Ii_0$" in Theorem \ref{thm: eff bir fin coeff gpair} and the assumption ``$\vol(-(K_X+B))\geq v$" in Theorem \ref{thm: eff bir bound volume more section gpair}(1) and Theorem \ref{thm: eff bir bound volume more section gpair}(2.a) are still necessary, even when $\dim X\geq 2$, the coefficients of $B$ belong a DCC set of non-negative real numbers, and $(X,B)$ is $\epsilon$-lc for some $\epsilon>0$. We remark that when $\dim X=1$, $\deg(-\lfloor m(K_X+B)\rfloor)\geq 1$ so $|-\lfloor m(K_X+B)\rfloor|$ defines a birational map for any positive integer $m$.
 
\begin{ex}\label{ex: gen hyp p11910}
Let 
$$X=X_{18}\subset W:=\mathbb P(2,3,5,9)$$
be a general hypersurface of degree $18$. By \cite[Theorem 8.1]{IF00}, $X$ is quasismooth. By \cite[Lemma 3.1.4; Part 4, The Big Table, page 135]{CPS10}, $X$ is an exceptional del Pezzo surface and $\vol(-K_X)=\frac{1}{15}$.

Let $H\subset W$ be the toric invariant divisor corresponds to the first coordinate, i.e. corresponds to the $2$ in $(2,3,5,9)$, and $B:=H|_X$. Since $K_W+X+\frac{1}{2}H\sim_{\Qq}0$, $K_X+\frac{1}{2}B\sim_{\Qq}0$. Since $X$ is exceptional, $(X,\frac{1}{2}B)$ is klt, and hence $\epsilon$-lc for some real number $\epsilon>0$. (Indeed, by \cite[
Lemma 3.1.4]{CPS10}, $(X,0)$ is $\frac{3}{5}$-lc. Since $(X,B)$ is lc, we may take $\epsilon=\frac{3}{10}$.)

Consider $(X,B_n:=(\frac{1}{2}-\frac{1}{2n})B)$.
Then $(X,B_n)$ is $\epsilon$-lc log Fano for every positive integer $n$. Moreover, for every positive integer $m<n$,
$$\lfloor m(K_X+B_n)\rfloor=mK_X+\lfloor\frac{m-1}{2}\rfloor B\sim_{\Qq} (m-2\lfloor\frac{m-1}{2}\rfloor)K_X.$$
Since $\vol(-K_X)=\frac{1}{15}$, 
$$\vol(-\lfloor m(K_X+B_n)\rfloor)=\vol(-(m-2\lfloor\frac{m-1}{2}\rfloor)K_X)\leq\vol(-2K_X)=\frac{4}{15}<1.$$ 

By Lemma \ref{lem: birational vol 1},  $|-\lfloor m(K_X+B_n)\rfloor|$ does not define a birational map for any positive integer $m<n$.
\end{ex}

\end{document}